\documentclass[11pt]{amsart}

\usepackage{mathrsfs}

\usepackage{amsmath,latexsym,amssymb,amsthm,array,amsfonts}

\usepackage{mathrsfs,dsfont,bbm}
\setlength{\topmargin}{-0.5cm} \setlength{\oddsidemargin}{0.5cm}
\setlength{\evensidemargin}{0.5cm}
\setlength{\textwidth}{14cm} \setlength{\textheight}{23cm}

\theoremstyle{plain}
\newtheorem{theorem}{Theorem}[section]
\newtheorem{lemma}[theorem]{Lemma}
\newtheorem{corollary}[theorem]{Corollary}
\newtheorem{proposition}[theorem]{Proposition}

\numberwithin{equation}{section}

\begin{document}

\title[On the linear fractional self-attracting diffusion]
{On the linear fractional self-attracting diffusion${}^{*}$}

\footnote[0]{${}^{*}$The Project-sponsored by NSFC (10571025) and
the Key Project of Chinese Ministry of Education (No.106076).}

\date{}

\author[L. Yan, Y. Sun and Y. Lu]{Litan Yan, Yu Sun and Yunsheng Lu}

\keywords{fractional Brownian motion, self-attracting diffusion, the
fractional It\^o integrals, local time and self-intersection local
time}

\subjclass[2000]{60G15, 60J55, 60H05}

\maketitle

\begin{center}
{\it Department of Mathematics, Donghua University,\\ 2999 North
Renmin Rd. Shanghai 201620, P. R. China.}
\end{center}

%%%%%%%%%%%%%%%%%%%%%
\maketitle

%%%%%%%%%%%%%%%%%%%%%%%%%%%%%%%%%%%%%%%%%%%%%%%%%%%%%%%%%%%%%%%%%%%%%%%%%%

\begin{abstract}
In this paper, we introduce the linear fractional self-attracting
diffusion driven by a fractional Brownian motion with Hurst index
$1/2<H<1$, which is analogous to the linear self-attracting
diffusion. For $1$-dimensional process we study its convergence and
the corresponding weighted local time. For $2$-dimensional process,
as a related problem, we show that the renormalized
self-intersection local time exists in $L^2$ if
$\frac12<H<\frac3{4}$.
\end{abstract}

\section{Introduction}%%%%%%%%%%%%%%%%%%%%%%%%%%%%%%%%%%%%%%%%
%%%%%%%%%%%%%%%%%%%%%%%%%%%%%%%%%%%%%%%%%%%%%%%%%%%%%%%%%%%%%%%
%%%%%%%%%%%%%%%%%%%%%%%%%%%%%%%%%%%%%%%%%%%%%%%%%%%%%%%%%%%%%%%
In 1991, Durrett and Rogers~\cite{Durr} studied a system that models
the shape of a growing polymer. Under some conditions, they
established asymptotic behavior of the solution of stochastic
differential equation
\begin{equation}\label{Int1.1}
X_t={B_t}+\int_0^t\int_0^s\Phi(X_s-X_u)duds,
\end{equation}
where $B$ is a $d$-dimensional standard Brownian motion and $\Phi$
Lipschitz continuous. If $\Phi(x)=\Psi(x)x/\|x\|$ and $\Psi(x)\geq
0$, $X_t$ is a continuous analogue of a process introduced by
Diaconis and studied by Pemantle~\cite{Peman1}. The path dependent
stochastic differential equation can be considered as polymer model.
In 1995, Cranston and Le Jan~\cite{Crans} extended the model and
introduced self-attracting diffusions, where for $d=1$ two cases of
are studied: the linear interaction where $\Phi$ is a linear
function and the constant interaction in dimension $1$, where
$\Phi(x)=\sigma{\rm {sign}}(x)$ for positive $\sigma$, and in both
cases the almost sure convergence of $X_t$ is proved.
Herrmanna-Roynette~\cite{Herr1}, Herrmanna-Scheutzowb~\cite{Herr2}
generalized these results.

On the other hand, the statistical properties of fractional Brownian
motion (fBm) are used to construct a path integral representation of
the conformations of some polymers (see, for examples, Chakravarti
and Sebastian~\cite{Chak}, Cherayil and Biswas~\cite{Chere},
Sebastian~\cite{Seba}). Thus, as a natural extension to
\eqref{Int1.1} one may consider the path dependent stochastic
differential equation of the form
\begin{equation}\label{Int1.6}
X^H_t={B_t^H}+\int_0^t\int_0^s\Phi(X_s^H-X_u^H)duds,
\end{equation}
where $B^H$ is a $d$-dimensional fractional Brownian motion with
Hurst index $H\in (0,1)$ and $\Phi$ Lipschitz continuous. Then it is
not difficult to show that the above equation admits a unique strong
solution. We will call the solution {\it the fractional
self-attracting diffusion} driven by fBm. In this paper, we consider
only a particular case as follows ({\it the linear fractional
self-attracting diffusion}):
\begin{equation}\label{1.7}
X^H_t={B_t^H}-a\int_0^t\int_0^s(X_s^H-X_u^H)duds+\nu t
\end{equation}
with $a>0$, $\nu\in {\mathbb R}^d$ and $\frac12<H<1$. Our aims are
to study the convergence and local times of the processes given
by~\eqref{1.7} with $d=1$. As a related problem, for the two
dimensional process we shall show that the renormalized
self-intersection local time exists in $L^2$ if
$\frac12<H<\frac3{4}$.

The structure of this paper is as follows. In Section~\ref{sec2} we
briefly recall fBm and related the It\^o type stochastic integral.
In Section~\ref{sec3} we investigate convergence of the linear
fractional self-attracting diffusion. We show that the process
converges with probability one as $t$ tends to infinity. In
Section~\ref{sec4}, we define the weighted local time of the process
and obtain a Meyer-Tanaka type formula. Finally, in
Section~\ref{sec5} for $2$-dimensional process we show that its
renormalized self-intersection local time exists in $L^2$ if
$\frac12<H<\frac3{4}$.

\section{Fractional Brownian motion and the It\^o type formula}
\label{sec2}%%%%
%%%%%%%%%%%%%%%%%%%%%%%%%%%%%%%%%%%%%%%%%%%%%%%%%%%%%%%%%%%%%%%%%
%%%%%%%%%%%%%%%%%%%%%%%%%%%%%%%%%%%%%%%%%%%%%%%%%%%%%%%%%%%%%%%%%
%%%%%%%%%%%%%%%%%%%%%%%%%%%%%%%%%%%%%%%%%%%%%%%%%%
In this section, we briefly recall the definition and properties of
stochastic integral with respect to fBm. Throughout this paper we
assume that $\frac12<H<1$ is arbitrary but fixed. Let
$(\Omega,{\mathcal F},\mu)$ be a complete probability space such
that a fractional Brownian motion with Hurst index $H$ is well
defined. For simplicity we let $C$ stand for a positive constant
depending only on the subscripts and its value may be different in
different appearance, and this assumption is also adaptable to $c$.

Recall that a centered continuous Gaussian process
$B^H=\{B_t^H,\,t\geq 0\}$ with the covariance function
$$
E\left[B_t^HB_s^H\right]=\frac12\left(t^{2H}+s^{2H}-|t-s|^{2H}
\right), \;s,t\geq 0,
$$
is called {\it the fBm with Hurst index} $H$. Here $E$ denotes the
expectation with respect to the probability law of $B^H$ on
$\Omega$. This process was first introduced by Kolmogorov and
studied by Mandelbrot and Van Ness~\cite{Man1}, where a stochastic
integral representation in terms of a standard Brownian motion was
established. The definition of stochastic integrals with respect to
the fBm has been investigated by several authors. Here, we refer to
Duncan {\it et al}~\cite{Dun} and Hu-{\O}ksendal~\cite{Hu4} (see
also Elliott-Van der Hoek~\cite{Elli}, Hu~\cite{Hu3},
Nualart~\cite{Nua3,Nua4}, {\O}ksendel~\cite{Oks}) for the definition
and the properties of the fractional It\^o integral
$$
\int_0^tu_sdB^H_s
$$
of an adapted process $u$. For $1/2<H<1$ we define the function
$\phi:{\mathbb R}_+\times{\mathbb R}_+\to{\mathbb R}_+$ by
$$
\phi(s,t)=H(2H-1)|s-t|^{2H-2},\quad s,t\geq 0.
$$
Recall that the Malliavin $\phi$-derivative of the function
$U:\,\Omega\to {\mathbb R}$ defined in~\cite{Dun} as follows:
$$
D_s^\phi U=\int_0^\infty \phi(r,s)D_rUdr,
$$
where $D_rU$ is the fractional Malliavin derivative at $r$. Define
the space ${\mathbb L}_\phi^{1,2}$ to be the set of measurable
processes $u$ such that $D_t^\phi u_s$ exists for a.a. $s,t\geq 0$
and
\begin{equation}\label{condition1}
 \|u\|^2_{{\mathbb
L}^{1,2}_\phi}:=E\left[\int_0^\infty\int_0^\infty D_s^\phi
u_tD_t^\phi u_sdsdt+\int_0^\infty\!\!\int_0^\infty
u_{s}u_{t}\phi(s,t) dsdt\right]\!<\infty.
\end{equation}
Thus, the integral $\int_0^\infty u_sdB_s^H$ can be well defined as
an element of $L^2(\mu)$ if $u$ satisfies~\eqref{condition1}. For
the integral process $\eta_t=\int_0^tu_sdB_s^H,$ we have (see, for
examples, Duncan {\it et al}~\cite{Dun}, Hu~\cite{Hu3})
$$
D_s^\phi\eta_t=\int_0^tD_s^\phi u_rdB_r^H+\int_0^tu_r\phi(s,r)dr.
$$
In particular, if $u$ is deterministic, then
$D_s^\phi\eta_t=\int_0^tu_r\phi(s,r)dr.$

\begin{theorem}[Duncan {\it et al}~\cite{Dun}, Hu~\cite{Hu3}]\label{theorem2.1}
Let $F\in C^{2}({\mathbb R})$ having polynomial growth and let the
process $X$ be given as follows:
$$
dX_t=v_tdt+u_tdB_t^H,\quad X_0=x\in {\mathbb R},
$$
where $u\in {\mathbb L}_\phi^{1,2}$ and measurable process $v$
satisfies $\int_0^t|v_s|ds<\infty$ $a.s$. Then we have, for all
$t\geq 0$
\begin{align}\label{Ito2-1}
F(X_t)=F(x)+\int_0^t\frac{\partial }{\partial x}F(s,X_s)dX_s
 +\int_0^t\frac{\partial^2}{\partial x^2}F(s,X_s)u_sD_s^\phi X_sds.
\end{align}
\end{theorem}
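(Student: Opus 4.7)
The plan is to prove the formula by the classical partition-Taylor-expansion approach, adapted to the Wick-It\^o calculus for fBm. By a standard stopping-time/truncation argument using the polynomial growth of $F$ and the integrability built into the hypotheses $u\in\mathbb{L}_\phi^{1,2}$ and $\int_0^t|v_s|ds<\infty$ a.s., I would first reduce to the case where $F,F',F''$ are bounded and uniformly continuous, and where $u$ belongs to a dense subclass of simple processes (for which the Skorohod/fractional It\^o integral reduces to an explicit finite sum). The general $u$ then follows by density, using the isometry~\eqref{condition1} to pass the left-hand side and both integrals on the right-hand side to the limit.

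With these reductions in place, fix a partition $0=t_0<t_1<\cdots<t_n=t$ with mesh $|\pi_n|\to 0$ and set $\Delta X_k=X_{t_{k+1}}-X_{t_k}$. A second-order Taylor expansion gives
\begin{equation*}
F(X_t)-F(x)=\sum_k F'(X_{t_k})\Delta X_k+\tfrac12\sum_k F''(X_{t_k})(\Delta X_k)^2+R_n,
\end{equation*}
where $R_n$ vanishes in $L^1$ by uniform continuity of $F''$ together with a bound on $\sum_k(\Delta X_k)^2$. Splitting $\Delta X_k=\int_{t_k}^{t_{k+1}}v_s\,ds+\int_{t_k}^{t_{k+1}}u_s\,dB_s^H$, the drift component of the first-order sum converges to $\int_0^t F'(X_s)v_s\,ds$ by pathwise continuity, while the stochastic component, being evaluated at the left endpoint, yields the fractional It\^o integral $\int_0^t F'(X_s)u_s\,dB_s^H$ in the limit.

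The principal obstacle is the quadratic sum: one must show
\begin{equation*}
\sum_k F''(X_{t_k})(\Delta X_k)^2\longrightarrow 2\int_0^t F''(X_s)\,u_s D_s^\phi X_s\,ds.
\end{equation*}
Terms involving the drift $v$ contribute only $O(|\pi_n|)$ and drop out. For the purely stochastic piece, the key identity is that the square of a Skorohod integral decomposes into a Wick-type square plus a $\phi$-pairing correction; schematically,
\begin{equation*}
\Bigl(\int_{t_k}^{t_{k+1}}u_s\,dB_s^H\Bigr)^2=W_k+2\int_{t_k}^{t_{k+1}}u_s D_s^\phi X_s\,ds,
\end{equation*}
where $W_k$ is a centered Wick object whose contribution to $\sum_k F''(X_{t_k})W_k$ can be shown to vanish in $L^1$ via the isometry~\eqref{condition1} applied to the telescoping structure. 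The explicit $\phi$-pairing term is precisely what converts, by a Riemann-sum argument, to the claimed integrand $2F''(X_s)u_sD_s^\phi X_s$. This step is the delicate one because, unlike standard Brownian motion, fBm with $H>1/2$ has vanishing classical quadratic variation, so the It\^o correction must be extracted entirely from the Malliavin/Wick structure rather than a pathwise bracket; the derivative $D_s^\phi X_s=\int_0^s u_r\phi(s,r)\,dr+\int_0^s D_s^\phi u_r\,dB_r^H$ encodes exactly this nonlocal contribution, and verifying the $L^1$ convergence of the Wick remainder is the technical heart of the argument.
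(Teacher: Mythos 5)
First, a point of comparison: the paper does not prove Theorem~\ref{theorem2.1} at all --- it is quoted from Duncan {\it et al}~\cite{Dun} and Hu~\cite{Hu3} --- so there is no in-paper argument to measure yours against. Judged on its own terms, your proposal contains a genuine error in the accounting of where the It\^o correction term comes from.

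The second-order Taylor term cannot produce the correction. For $H>1/2$ one has $E\bigl[(\Delta X_k)^2\bigr]=O(|\pi_n|^{2H})$ with $2H>1$, so $\tfrac12\sum_kF''(X_{t_k})(\Delta X_k)^2\to 0$; there is nothing to extract from it. Your key identity
\begin{equation*}
\Bigl(\int_{t_k}^{t_{k+1}}u_s\,dB_s^H\Bigr)^2=W_k+2\int_{t_k}^{t_{k+1}}u_s D_s^\phi X_s\,ds
\end{equation*}
is false. The product formula for Skorohod integrals gives $\delta(u1_{I_k})^2=\delta\bigl(u1_{I_k}\,\delta(u1_{I_k})\bigr)+\int_{I_k}u_s\,D_s^\phi\bigl(\delta(u1_{I_k})\bigr)\,ds$, and the trace term pairs $u$ with itself only over $I_k\times I_k$ through $\phi$, contributing $O(|I_k|^{2H})$ per interval. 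It cannot generate the global quantity $D_s^\phi X_s=\int_0^sD_s^\phi u_r\,dB_r^H+\int_0^su_r\phi(s,r)\,dr$, which integrates the kernel $\phi(s,\cdot)$ over the entire past $[0,s]$ and is of order $|I_k|$ per interval.

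The correction actually arises in the first-order sum, which you pass to the limit with no correction at all. The ordinary product $F'(X_{t_k})\,\delta(u1_{I_k})$ is not the Skorohod integral $\delta\bigl(F'(X_{t_k})u1_{I_k}\bigr)$; their difference is the trace
\begin{equation*}
\int_{t_k}^{t_{k+1}}u_s\,D_s^\phi\bigl(F'(X_{t_k})\bigr)\,ds=\int_{t_k}^{t_{k+1}}u_s\,F''(X_{t_k})\,D_s^\phi X_{t_k}\,ds,
\end{equation*}
which is genuinely of order $|I_k|$ precisely because of the long-range kernel, and whose sum converges to $\int_0^tF''(X_s)u_sD_s^\phi X_s\,ds$. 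So the correct Riemann-sum proof is the mirror image of yours: keep the trace correction in the first-order term and show the second-order term vanishes. (This is also why \eqref{Ito2-1} carries no factor $\tfrac12$.) Your surrounding reductions --- localization via polynomial growth, approximation by simple processes, passing to the limit through \eqref{condition1} --- are reasonable, but the central computation as written derives the formula from an identity that does not hold.
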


\section{Convergence}
\label{sec3}%%%%%%%%%
%%%%%%%%%%%%%%%%%%%%%%%%%%%%%%%%%%%%%%%%%%%%%%%%%%%%%%%%%%%%%%%%%%
%%%%%%%%%%%%%%%%%%%%%%%%%%%%%%%%%%%%%%%%%%%%%%%%%%%%%%%%%%%%%%%%%%
%%%%%%%%%%%%%%%%%%%%%%%%%%%%%%%%%%%%%%%%%%%%%%%%%%%%%%%%%%%%%%%%%%
In this section, we consider convergence of the solution of the
equation~\eqref{1.7}, the so-call {\it linear fractional
self-attracting diffusion}. The method used here is essentially due
to M. Cranston and Y. Le Jan~\cite{Crans}.
\begin{proposition}\label{th3.1}
The solution to the equation~\eqref{1.7} can be expressed as
\begin{equation}\label{seceq3-1}
X^H_t=X_0^H+\int_0^th(t,s)dB_s^H+\nu\int_0^th(t,s)ds,
\end{equation}
where
\begin{equation}\label{seceq3-2}
h(t,s)=
 \begin{cases}
 1-ase^{\frac12as^2}\int_s^te^{-\frac12au^2}du, &\text{$t\geq s$},\\
      0,&\text{$t<s$}
 \end{cases}
\end{equation}
for $s,t\geq 0$.
\end{proposition}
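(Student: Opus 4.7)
Equation~\eqref{1.7} is linear in $X^H$ with $B^H$ entering only additively, so the map $B^H\mapsto X^H$ is affine and I expect the solution to have the form~\eqref{seceq3-1} for some deterministic kernel $h(t,s)$ supported on $\{s\le t\}$. The strategy is to substitute this Ansatz into~\eqref{1.7}, derive a Volterra-type integral equation for $h$, and reduce it to a short linear ODE that can be integrated explicitly.

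\textbf{Step 1 (kernel equation).} I would plug~\eqref{seceq3-1} into the double integral of~\eqref{1.7} and apply a Fubini-type exchange (justified in the fractional It\^o framework of Section~\ref{sec2} because the integrand $h(s,r)$ is deterministic in $r$) to obtain
\[
\int_0^t\!\int_0^s(X^H_s-X^H_u)\,du\,ds \;=\; \int_0^t K(t,r)\,dB^H_r \;+\; \nu\!\int_0^t K(t,r)\,dr,
\]
with $K(t,r):=\int_r^t\bigl[s\,h(s,r)-\int_r^s h(u,r)\,du\bigr]ds$. Matching coefficients of $dB^H_r$ on both sides of~\eqref{1.7} then forces
\[
h(t,r) \;=\; 1 - a\,K(t,r),\qquad 0\le r\le t,
\]
and the $\nu$-relation is automatic since the same kernel governs the drift.

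\textbf{Step 2 (ODE reduction).} Fix $r$ and set $g(t):=h(t,r)$. One more Fubini applied to $\int_r^t\!\int_r^s g(u)\,du\,ds$ rewrites the displayed relation as
\[
g(t) \;=\; 1 + at\!\int_r^t g(s)\,ds \;-\; 2a\!\int_r^t s\,g(s)\,ds,\qquad g(r)=1.
\]
Differentiating once gives $g'(t) = -at\,g(t) + a\!\int_r^t g(s)\,ds$, so that $g'(r)=-ar$; differentiating a second time cancels the integral term and leaves the clean linear ODE
\[
g''(t) + at\,g'(t) \;=\; 0.
\]
Integrating once yields $g'(t) = -ar\,e^{\frac{a}{2}(r^2-t^2)}$, and a last integration from $r$ to $t$ together with $g(r)=1$ produces exactly~\eqref{seceq3-2}.

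\textbf{Main obstacle.} The subtle point is the Fubini exchange in Step 1, which must be carried out in the divergence-type framework of Section~\ref{sec2}; this requires checking the integrability condition~\eqref{condition1} for the relevant (deterministic) integrands $s\mapsto h(s,r)$ and $s\mapsto sh(s,r)$ uniformly in $r$. Once this is justified, the reduction to a second-order linear ODE in Step 2 is essentially forced by the structure of the problem, and the explicit formula~\eqref{seceq3-2} drops out by elementary integration. Uniqueness of the representation is automatic from the linearity of~\eqref{1.7} combined with a Gronwall estimate.
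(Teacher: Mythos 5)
Your derivation is correct and yields exactly the kernel~\eqref{seceq3-2}; I checked the Volterra equation $g(t)=1+at\int_r^tg(s)\,ds-2a\int_r^tsg(s)\,ds$, the reduction to $g''+atg'=0$ with $g(r)=1$, $g'(r)=-ar$, and the final integration, and all steps are sound. The Fubini exchanges you flag are indeed harmless here, since every integrand is deterministic, continuous and bounded on $[0,T]$, so condition~\eqref{condition1} reduces to finiteness of $\int\!\!\int u_su_t\phi(s,t)\,ds\,dt$, which is immediate. Note, however, that the paper does not actually write out a proof: it only remarks that the proposition ``can be obtained by the same method as Cranston and Le Jan,'' whose route is different from yours. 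There one introduces the auxiliary process $Y_t=\int_0^t(X^H_t-X^H_u)\,du=tX^H_t-\int_0^tX^H_u\,du$, observes that $dY_t=t\,dX^H_t$, so that $Y$ solves the \emph{first-order} linear equation $dY_t=-atY_t\,dt+t\,dB^H_t$, solves it by an integrating factor to get $Y_t=e^{-\frac{a}{2}t^2}\int_0^tse^{\frac{a}{2}s^2}dB^H_s$, and then recovers $h$ from $X^H_t=B^H_t-a\int_0^tY_s\,ds+\nu t$ by a single Fubini. That route is slightly shorter (one first-order SDE instead of your second-order ODE obtained after two differentiations) and makes the structure of the drift transparent; your kernel-matching Ansatz is more systematic and would generalize directly to other linear path-dependent interactions where no convenient auxiliary process suggests itself. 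Your closing remark on uniqueness via linearity plus Gronwall is the right way to upgrade the Ansatz computation to a genuine characterization of the solution, and matches the existence-and-uniqueness claim made in the introduction of the paper.
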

This proposition can also be obtained by the same method as Cranston
and Le Jan~\cite{Crans}. It follows from the Ito type formula that
\begin{align*}
F(X_t^H)&=F(0)+\int_0^tF^{'}(X_s^H)dX_s^H+\int_0^tF^{''}(X_s^H)
D^\phi_sX_s^Hds\\
&=F(0)+\int_0^tF^{'}(X_s^H)dX_s^H\\
&\qquad+2{H}(2H-1)\int_0^tF^{''}(X_s^H)
ds\int_0^sh(s,m)(s-m)^{2H-2}dm
\end{align*}
for $F\in C^2({\mathbb R})$ having polynomial growth. On the other
hand, an elementary calculation yields
\begin{equation}\label{seceq3-2-10}
h(s)=\lim_{t\uparrow\infty}h(t,s)=1-ase^{\frac{a}2s^2} \int_s^\infty
e^{-\frac{a}2u^2}du,
\end{equation}
which is continuous on $[0,\infty)$.
\begin{theorem}\label{th3.2}
The solution $X^H_t$ to~\eqref{1.7} converges in $L^2(\mu)$ to the
following element as $t\to \infty$:
$$
X^H_\infty\equiv \int_0^\infty h(s)dB_s^H+\nu\int_0^\infty h(s)ds.
$$
\end{theorem}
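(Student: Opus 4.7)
My plan is to reduce the convergence to $L^2$-estimates for the function $g_t(s) := h(t,s) - h(s)$, where we use the convention $h(t,s) = 0$ for $s > t$. By Proposition~\ref{th3.1} and the definition of $X_\infty^H$,
\[
X_t^H - X_\infty^H = \int_0^\infty g_t(s)\,dB_s^H + \nu\int_0^\infty g_t(s)\,ds,
\]
and since the fractional It\^o integral of a deterministic integrand is mean-zero Gaussian, expanding the square yields
\[
\|X_t^H - X_\infty^H\|_{L^2(\mu)}^2 = H(2H-1)\int_0^\infty\!\!\int_0^\infty g_t(u)g_t(v)|u-v|^{2H-2}\,du\,dv + \nu^2\Bigl(\int_0^\infty g_t(s)\,ds\Bigr)^2,
\]
so it suffices to show both summands tend to $0$.

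Set $\alpha(s) := as\,e^{as^2/2}$ and $\beta(s) := \int_s^\infty e^{-au^2/2}\,du$, so that $h(s) = 1 - \alpha(s)\beta(s)$ and, by \eqref{seceq3-2}, $g_t(s) = \alpha(s)\beta(t)$ for $s\le t$ while $g_t(s) = -h(s)$ for $s > t$. One integration by parts gives $\beta(t) = e^{-at^2/2}/(at) + O(t^{-3}e^{-at^2/2})$, so $h(s) = O(s^{-2})$ as $s\to\infty$ and $h\in L^1(0,\infty)\cap L^{1/H}(0,\infty)$. The deterministic summand then reduces to a direct calculation: using $\int_0^t\alpha(s)\,ds = e^{at^2/2}-1$,
\[
\int_0^\infty g_t(s)\,ds = \beta(t)\bigl(e^{at^2/2}-1\bigr) - \int_t^\infty h(s)\,ds \longrightarrow 0,
\]
since $\beta(t)e^{at^2/2}\to 0$ and $h\in L^1$.

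The stochastic summand is where the real work lies. Although $g_t(s)\to 0$ pointwise, $g_t$ develops a spike of height $\sim 1-h(t)\to 1$ near $s=t$, so naive dominated convergence on the double integral cannot succeed. I would instead invoke the Hardy--Littlewood--Sobolev inequality, which for $\tfrac12 < H < 1$ reads $\|g\|_\phi^2 \le C_H\|g\|_{L^{1/H}(0,\infty)}^2$, and prove $\|g_t\|_{L^{1/H}}\to 0$. The tail $\int_t^\infty h(s)^{1/H}\,ds$ vanishes because $h\in L^{1/H}$, and on $[0,t]$
\[
\int_0^t g_t(s)^{1/H}\,ds = \beta(t)^{1/H}\int_0^t \alpha(s)^{1/H}\,ds;
\]
integration by parts on $\int_0^t s^{1/H}e^{as^2/(2H)}\,ds$ yields leading behaviour of order $t^{1/H-1}e^{at^2/(2H)}$, which when multiplied by $\beta(t)^{1/H}\sim (at)^{-1/H}e^{-at^2/(2H)}$ gives $O(t^{-1})$ and hence $\|g_t\|_{L^{1/H}} = O(t^{-H})$. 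This shrinking-spike estimate is the principal obstacle, and it is precisely where the $L^{1/H}$ control (rather than pointwise bounds) is essential.
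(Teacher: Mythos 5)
Your proof is correct, and while the overall skeleton is forced to coincide with the paper's (decompose $X_t^H-X_\infty^H$ into a Wiener-type integral against the kernel difference plus a deterministic drift term, and kill each piece), the key estimate is carried out by a genuinely different route. The paper bounds the double integral $\int\int g_t(u)g_t(v)\phi(u,v)\,du\,dv$ directly via the pointwise product bound $g_t(s_1)g_t(s_2)\le t^{-2}s_1s_2e^{\frac{a}{2}(s_1^2+s_2^2)}e^{-at^2}$ and arrives at the (valid but non-sharp) rate $O(t^{2H-2})$ in~\eqref{eq3.4}; you instead invoke the Hardy--Littlewood--Sobolev embedding $\|g\|_\phi\le C_H\|g\|_{L^{1/H}}$ and reduce everything to the Laplace-type asymptotics of $\beta(t)^{1/H}\int_0^t\alpha(s)^{1/H}ds$, which yields the sharper rate $O(t^{-2H})$ for the stochastic term. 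Your treatment is also more complete in one respect: by setting $h(t,s)=0$ for $s>t$ you correctly account for the tail terms $\int_t^\infty h(s)\,dB_s^H$ and $\nu\int_t^\infty h(s)\,ds$, which the paper's displayed final inequality silently drops (they reappear only in the proof of Theorem~\ref{th3.3}); these vanish by your observation $h(s)=O(s^{-2})$, so $h\in L^1\cap L^{1/H}$. The trade-off is that the paper's argument is entirely elementary, whereas yours imports HLS --- but the HLS route gives the correct decay exponent and a cleaner functional-analytic framing, and the $O(t^{-2H})$ bound would in fact also feed into the Borel--Cantelli argument for almost sure convergence.
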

\begin{proof}
Clearly, we have
$$
|h(t,s_1)-h(s_1)||h(t,s_2)-h(s_2)|\leq
\frac1{t^2}s_1s_2e^{\frac{a}2(s_1^2+s_2^2)}e^{-at^2}
$$
for $s_1,s_2\leq t$ and $\left|\int_0^t[h(t,s)-h(s)]ds\right|\leq
\frac1{at}\to 0\quad (t\to \infty).$ From~\eqref{condition1} it
follows that
\begin{align}\label{eq3.4}
E&\left|\int_0^t[h(t,s)-h(s)]dB_s^H\right|^2\leq
\frac{2H}{at^{2-2H}}\to 0
\end{align}
as $t\to \infty$, which proves
\begin{align*}
E\left|X_t^H-X_\infty^H\right|^2\leq
2E\left|\int_0^t[h(t,s)-h(s)]dB_s^H\right|^2+
2\left|\int_0^t[h(t,s)-h(s)]ds\right|^2\to 0.
\end{align*}
This completes the proof.
\end{proof}

\begin{theorem}\label{th3.3}
The solution $X^H_t$ to~\eqref{1.7} converges to $X^H_\infty$ almost
surely as $t\to \infty$.
\end{theorem}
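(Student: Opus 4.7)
The plan is to combine the $L^2$-estimate from Theorem~\ref{th3.2} with Gaussian concentration, and then to pass from integer times to continuous $t$ via a chaining estimate for the oscillation on unit intervals. First I would note that the proof of Theorem~\ref{th3.2}, together with the asymptotic $h(s)\sim 1/(as^2)$ as $s\to\infty$ (which follows directly from \eqref{seceq3-2-10} and iterated integration by parts on the Gaussian tail), upgrades the bound there into the quantitative estimate $\mathrm{Var}(X_t^H-X_\infty^H)\le Ct^{2H-2}$ for $t\ge 1$, while the deterministic mean of $X_t^H-X_\infty^H$ is $O(1/t)$. Since $X_t^H-X_\infty^H$ is Gaussian, the standard tail bound yields
\begin{equation*}
P\bigl(|X_n^H-X_\infty^H|>\varepsilon\bigr)\le 2\exp\bigl(-c\varepsilon^2 n^{2-2H}\bigr)
\end{equation*}
for every $\varepsilon>0$ and all $n$ sufficiently large. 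As $2-2H>0$, this is summable in $n$, and Borel--Cantelli gives $X_n^H\to X_\infty^H$ almost surely along the integers.

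Next I would control the oscillation $\Omega_n:=\sup_{t\in[n,n+1]}|X_t^H-X_n^H|$. Using the identity $h(t,r)-h(n,r)=are^{ar^2/2}(\psi(n)-\psi(t))$ valid for $r\le n$, with $\psi(u):=\int_u^\infty e^{-av^2/2}dv$, one gets
\begin{equation*}
X_t^H-X_n^H=(\psi(n)-\psi(t))\,Y_n+\int_n^t h(t,r)\,dB_r^H+\nu\,\rho_n(t),
\end{equation*}
where $Y_n:=\int_0^n are^{ar^2/2}\,dB_r^H$ and $\rho_n$ is a deterministic remainder. A direct computation gives $E|X_t^H-X_s^H|^2\le C\bigl((t-s)^{2H}+an^{2H}(t-s)^2\bigr)$ for $s,t\in[n,n+1]$, while the $L^2$-bound of Theorem~\ref{th3.2} supplies the complementary uniform bound $E|X_t^H-X_s^H|^2\le Cn^{2H-2}$. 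Feeding both estimates into Dudley's entropy integral yields $E\Omega_n\le Cn^{H-1}\sqrt{\log n}\to 0$, after which the Borell--TIS inequality produces the sub-Gaussian tail $P(\Omega_n>\delta)\le 2\exp(-c\delta^2 n^{2-2H})$ for every fixed $\delta>0$. A second application of Borel--Cantelli yields $\Omega_n\to 0$ a.s., and combining this with the integer-time convergence above gives $X_t^H\to X_\infty^H$ almost surely.

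The main obstacle will be the oscillation estimate. Taken separately, the two stochastic components $(\psi(n)-\psi(t))Y_n$ and $\int_n^t h(t,r)\,dB_r^H$ have variances of order $an^{2H}(t-s)^2$ and $(t-s)^{2H}$ respectively, either of which is much larger than the actual increment variance $n^{2H-2}$. The chaining argument must therefore exploit the same negative correlation between these two pieces that already drives the $L^2$-rate $t^{2H-2}$ in Theorem~\ref{th3.2}, rather than estimate them term by term; in practice, one takes the pointwise minimum of the two variance bounds before entering the entropy integral, and it is this combination that yields the decisive factor $n^{H-1}\sqrt{\log n}$.
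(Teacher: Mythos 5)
Your argument is correct, and it reaches the conclusion by a genuinely different route than the paper. Both proofs rest on the same quantitative input, namely the $L^2$ rate $\mathrm{Var}(X_t^H-X_\infty^H)\le Ct^{2H-2}$ coming from \eqref{eq3.4}, and both split the work into convergence along a discrete skeleton plus control of the oscillation between skeleton points. But where you use unit intervals $[n,n+1]$, a Dudley entropy bound for $E\,\Omega_n$, and the Borell--TIS inequality to get the sub-Gaussian tail $P(\Omega_n>\delta)\le 2e^{-c\delta^2 n^{2-2H}}$, the paper instead discretizes on the shrinking mesh $n+\tfrac{k}{n}$, $0\le k<n$, bounds the grid values directly by a Gaussian tail estimate, and controls the oscillation on each mesh interval by comparing (via Slepian's lemma) the increment process to a rescaled fractional Brownian motion $\tfrac{\sqrt{1+a}}{n^H}B^H$ and then applying Chebyshev's inequality, which yields only the polynomial bound $P(\sup_s|R_s^{n,k}|>\varepsilon)\le C(1+a)\varepsilon^{-2}n^{-2H}$ before summing over the $n$ values of $k$. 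Your exponential oscillation tail makes the Borel--Cantelli step completely comfortable, whereas the paper's polynomial bound is exactly the thinnest point of its argument; in that sense your route is the more robust of the two, at the cost of invoking heavier Gaussian machinery (Dudley and Borell--TIS) in place of the elementary Slepian--Chebyshev comparison.

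Two small inaccuracies, neither of which damages the proof. First, the coefficient in your local increment bound is not quite $an^{2H}(t-s)^2$: writing the first stochastic piece as $\bigl(\psi(t)-\psi(s)\bigr)\int_0^s are^{ar^2/2}dB_r^H$, the factor $e^{-as^2}$ from $\psi(t)-\psi(s)$ cancels the factor $e^{as^2}$ in $\mathrm{Var}\bigl(\int_0^s are^{ar^2/2}dB_r^H\bigr)$ and leaves a bound of order $a^2n^{2H+2}(t-s)^2$ (or $n^{2-2H}(t-s)^2$ with a sharper Laplace-type estimate); since the power of $n$ enters the entropy integral only through $\sqrt{\log n}$, the conclusion $E\,\Omega_n\lesssim n^{H-1}\sqrt{\log n}$ is unaffected. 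Second, the closing remark about ``negative correlation between the two pieces'' is a red herring: once the exponentials cancel as above, the two stochastic components of the increment can be estimated term by term, and the decisive complementary ingredient is the uniform diameter bound $n^{2H-2}$ supplied by the $L^2$ convergence --- exactly the ``pointwise minimum of the two variance bounds'' you describe --- not any correlation between the two components themselves.
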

\begin{proof}
Without loss of generality, we may assume $\nu=0$. Note that by
Proposition~\ref{th3.1}
\begin{align*}
{X}_t^H-X_\infty^H&=\int_0^t[h(t,s)-h(s)]dB_s^H-\int_t^\infty
h(s)dB_s^H\\
&\equiv Y_t^H-\int_t^\infty h(s)dB_s^H,\quad t\geq 0.
\end{align*}
Thus, it is enough to show that $Y_t^H$ converges to $0$ almost
surely as $t\to \infty$.

For integer numbers $n$, $k$, $0\leq k<n$ we set
$Z^H_{n,k}=Y_{n+\frac{k}n}^H$. Then $Z^H_{n,k}$ is Gaussian, and we
have
\begin{align*}
E\left[(Z^H_{n,k})^2\right]=E\left[\left|\int_0^{n+\frac{k}n}
[h(t,s)-h(s)]dB_s^H\right|^2\right]\leq\frac{2H}{an^{2-2H}}
\end{align*}
by~\eqref{eq3.4}, and for any $\varepsilon>0$
$$
P(|Z^H_{n,k}|>\varepsilon)\leq
2\frac{e^{-\frac{1}{4H}\varepsilon^2an^{2-2H}}}
{\varepsilon{n}^{1-H}\sqrt{a\pi}}.
$$
Furthermore, for $s\in (0,1)$ we set
$R_s^{n,k}=Y_{n+\frac{k+s}n}-Y_{n+\frac{k}n}$. Then $R_s^{n,k},0\leq
s\leq 1$ is a Gaussian process and
$$
E\left[(R^{n,k}_s-R^{n,k}_{s^{'}})^2\right]\leq
\frac{|s-s^{'}|^{2H}}{n^{2H}}(1+a)=\frac{1+a}{n^{2H}}
E\left[(B^H_s-B^H_{s^{'}})^2\right]
$$
by~\eqref{eq3.4}. For any $\varepsilon>0$,
\begin{align*}
P\left(\sup_{0\leq s\leq 1}|R^{n,k}_s|>\varepsilon\right)&\leq
P\left(\frac{\sqrt{1+a}}{n^{H}}\sup_{0\leq s\leq
1}|B^H_s|>\varepsilon\right)\quad ({\text {by Slepian's lemma}})\\
&\leq (1+a)\frac{E\left[\sup_{0\leq s\leq
1}|B^H_s|^2\right]}{\varepsilon^2n^{2H}}\\
&\leq C_H(1+a)\frac{1}{\varepsilon^2n^{2H}}.
\end{align*}
Thus, the convergence with probability one follows from the
Borel-Cantelli Lemma and
$$
\left\{\sup_{n+\frac{k}n<t<n+\frac{k+1}n}|Y_t|>\varepsilon\right\}
\subseteq \{|Z^H_{n,k}|>\varepsilon/2\}\cup\left\{\sup_{0\leq s\leq
1}|R^{n,k}_s|>\varepsilon/2\right\}
$$
for all $k,n\geq 0$. This completes the proof of the theorem.
\end{proof}

\section{Local time and Meyer-Tanaka type formula}
\label{sec4}%%%%%%%%%
%%%%%%%%%%%%%%%%%%%%%%%%%%%%%%%%%%%%%%%%%%%%%%%%%%%%%%%%%%%%%%%%%%
%%%%%%%%%%%%%%%%%%%%%%%%%%%%%%%%%%%%%%%%%%%%%%%%%%%%%%%%%%%%%%%%%%
%%%%%%%%%%%%%%%%%%%%%%%%%%%%%%%%%%%%%%%%%%%%%%%%%%%%%%%%%%%%%%%%%%
In this section, we consider the linear fractional self-attracting
diffusion $X^H=\{X^H_t,0\leq t\leq T\}$ with $\nu=0$. It follows
that the process is a centered Gaussian process. We study the usual
local time and weighted local time of the process and obtain the
Meyer-Tanaka type formula of the weighted local time.

For $T\geq t\geq s\geq 0$ we put
$$
\sigma^2_t=E\left[({X_{t}^{H}})^2\right],\quad\sigma^2_{t,s}
=E\left[\left(X_{t}^{H}-X_{s}^{H}\right)^2\right].
$$
Then
$$
\sigma^2_t=\int_0^{t}\int_0^{t}h(t,u)h(t,v)\phi(u,v)dudv,\qquad
0\leq t\leq T
$$
and
\begin{equation}\label{sec4-eq4.100}
\sigma^2_{t,s}=\int_0^{t}\int_0^{t}\left[h(t,u)-h(s,u)\right]
\left[h(t,v)-h(s,v)\right]\phi(u,v).
\end{equation}
Noting that for all $t\geq s\geq 0$,
$$
\int_0^t\int_0^t\phi(u,v)dudv=t^{2H},\qquad
e^{-\frac{a}2(t^2-s^2)}\leq h(t,s)\leq 1,
$$
we get
\begin{equation}\label{sec4-eq4.1}
e^{-\frac{a}2t^2}t^{2H}\leq
\sigma_t^2=\int_0^t\int_0^th(t,u)h(t,v)\phi(u,v)dudv\leq t^{2H}.
\end{equation}

\begin{lemma}\label{lemma4.1}
For all $t\geq s\geq 0$ we have
\begin{equation}\label{sec4-eq4.2}
c_{a,H,T}(t-s)^{2H}\leq \sigma_{t,s}^2\leq C_{a,H,T}(t-s)^{2H},
\end{equation}
where $C_{a,H,T},c_{a,H,T}>0$ are two constants depending on
$a,H,T$.
\end{lemma}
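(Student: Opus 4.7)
The plan is to use the defining SDE \eqref{1.7} (with $\nu=0$) to compare $X^H_t-X^H_s$ with the fBm increment $B^H_t-B^H_s$ in $L^2(\mu)$. Subtracting \eqref{1.7} at $t$ and $s$ gives
\[
X^H_t-X^H_s=(B^H_t-B^H_s)-a\int_s^t\!\int_0^r(X^H_r-X^H_u)\,du\,dr.
\]
The bound \eqref{sec4-eq4.1} together with $\|X^H_r-X^H_u\|_{L^2}\le\|X^H_r\|_{L^2}+\|X^H_u\|_{L^2}\le 2T^H$ for $0\le u\le r\le T$, followed by two applications of Minkowski's inequality, will control the double integral by $2T^{H+1}(t-s)$ in $L^2$. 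Combined with $\|B^H_t-B^H_s\|_{L^2}=(t-s)^H$, the triangle and reverse triangle inequalities then produce the two-sided comparison
\[
\bigl|\,\sigma_{t,s}-(t-s)^H\,\bigr|\le 2aT^{H+1}(t-s).
\]

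The upper bound is immediate: since $t-s\le T$ one has $2aT^{H+1}(t-s)\le 2aT^2(t-s)^H$, hence $\sigma_{t,s}\le(1+2aT^2)(t-s)^H$ and $C_{a,H,T}:=(1+2aT^2)^2$ works. For the lower bound I would split into two regimes. First, fix $\delta\in(0,T]$ so small that $2aT^{H+1}\delta^{1-H}\le 1/2$; in the range $t-s\le\delta$ the reverse triangle inequality already yields $\sigma_{t,s}\ge(t-s)^H/2$, hence $\sigma_{t,s}^2\ge(t-s)^{2H}/4$. For the remaining range $\delta\le t-s\le T$ I would invoke a compactness argument: the function $(s,t)\mapsto\sigma_{t,s}^2$ is continuous, so on the compact set $K:=\{(s,t):0\le s\le t\le T,\ t-s\ge\delta\}$ it suffices to check strict positivity. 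By Proposition \ref{th3.1}, $X^H_t-X^H_s$ is the fractional It\^o integral of the deterministic integrand $u\mapsto h(t,u)-h(s,u)$; on $(s,t)$ this equals $h(t,u)\ge e^{-a(t^2-u^2)/2}>0$ by \eqref{seceq3-2} (and the inequality on $h$ used in \eqref{sec4-eq4.1}), so the integrand is nontrivial. Strict positive-definiteness of the $\phi$-inner product then forces $\sigma_{t,s}^2>0$, compactness supplies a minimum $c'>0$ on $K$, and $(t-s)^{2H}\le T^{2H}$ there gives the lower bound with $c_{a,H,T}:=\min\{1/4,\ c'T^{-2H}\}$.

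The step I expect to require the most care is the strict positivity of $\sigma_{t,s}^2$ in the ``mid-range'' regime, which ultimately rests on the positive-definiteness of the kernel $\phi(u,v)=H(2H-1)|u-v|^{2H-2}$ for $H>1/2$. A fully quantitative alternative would be to expand $\sigma_{t,s}^2=\|h(t,\cdot)-h(s,\cdot)\|_\phi^2$ via the decomposition $X^H_t-X^H_s=\int_0^s[h(t,u)-h(s,u)]dB^H_u+\int_s^t h(t,u)dB^H_u$ and control the negative cross term against the positive ``fresh'' piece $\int_s^t\int_s^t h(t,u)h(t,v)\phi(u,v)du\,dv\ge e^{-aT^2}(t-s)^{2H}$, but the compactness route is considerably cleaner; the remainder of the argument is Minkowski bookkeeping driven by \eqref{sec4-eq4.1}.
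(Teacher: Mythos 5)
Your argument is correct, but it is genuinely different from the paper's. The paper works directly with the representation $\sigma_{t,s}^2=\int_0^t\int_0^t[h(t,u)-h(s,u)][h(t,v)-h(s,v)]\phi(u,v)\,du\,dv$, splits the domain into $[s,t]^2$, $[s,t]\times[0,s]$, $[0,s]\times[s,t]$ and $[0,s]^2$, estimates each piece using the explicit formula for $h$, and deduces the two-sided bound from the limiting behaviour of $\sigma_{t,s}^2/(t-s)^{2H}$ as $s\uparrow t$ and as $s\downarrow 0$ (itself an implicit continuity-plus-compactness step of exactly the kind you make explicit). Your route instead treats $X^H$ as a perturbation of $B^H$ via the SDE, and the Minkowski estimate $|\sigma_{t,s}-(t-s)^H|\le 2aT^{H+1}(t-s)$ is clean and fully quantitative: it gives the upper bound everywhere and the lower bound with explicit constants in the regime $t-s\le\delta$, with no computation involving $h$ at all. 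The price is the mid-range regime, where you must invoke continuity of $(s,t)\mapsto\sigma_{t,s}^2$ (which your own upper bound supplies) together with strict positive-definiteness of the form $f\mapsto\int\int f(u)f(v)\phi(u,v)\,du\,dv$ for $\tfrac12<H<1$; this is a standard fact (via the spectral representation $c_H\int|\hat f(\xi)|^2|\xi|^{1-2H}\,d\xi$, valid for your bounded compactly supported integrand, which is strictly positive on $(s,t)$ since $h(t,u)\ge e^{-a(t^2-u^2)/2}>0$), but it is an external ingredient the paper does not need, so you should cite it explicitly rather than leave it as an aside. Net comparison: the paper's proof is more elementary and self-contained but more computational and its final ``it follows that there are two constants'' step is no more rigorous than your compactness argument; yours is shorter, yields explicit constants where it matters most (small increments), and generalizes immediately to any drift that is Lipschitz in the $L^2$ sense, at the cost of one standard harmonic-analysis fact.
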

\begin{proof}
For all $t\geq s\geq 0$ we have
\begin{align*}
\sigma_{t,s}^2&=\int_0^{t}\int_0^{t}\left[h(t,u)-h(s,u)\right]
\left[h(t,v)-h(s,v)\right]\phi(u,v)dudv\\
&=\int_s^{t}\int_s^{t}h(t,u)h(t,v)\phi(u,v)dudv+\\
&\qquad
\int_s^{t}\int_0^{s}h(t,u)\left[h(t,v)-h(s,v)\right]\phi(u,v)dudv+\\
&\qquad\int_0^{s}\int_s^{t}\left[h(t,u)-h(s,u)\right]
h(t,v)\phi(u,v)dudv+\\
&\qquad\int_0^{s}\int_0^{s}\left[h(t,u)-h(s,u)\right]
\left[h(t,v)-h(s,v)\right]\phi(u,v)dudv\\
&\equiv A_{[s,t]^2}+A_{[s,t]\times [0,s]}+A_{[0,s]\times
[s,t]}+A_{[0,s]^2}.
\end{align*}
On the other hand, for all $t\geq s\geq 0$ we have
\begin{align*}
A_{[s,t]\times [0,s]}&=A_{[0,s]\times
[s,t]}\\
&=-\left(\int_s^te^{-\frac{a}2w^2}dw\right)
\int_0^{s}aue^{\frac{a}2u^2}du\int_s^{t}h(t,v)\phi(u,v)dv,
\end{align*}
\begin{align*}
A_{[0,s]^2}=2\left(\int_s^te^{-\frac{a}2w^2}dw\right)^2
\int_0^{s}a^2ue^{\frac{a}2u^2}du\int_0^{u}ve^{\frac{a}2v^2}
\phi(u,v)dv.
\end{align*}
But, some elementary calculus can show that
\begin{align*}
C_Ha^2e^{-aT^2}s^{2+2H}(t-s)^{2}&\leq A_{[0,s]^2}\leq
2a^2s^{2+2H}(t-s)^{2},\\
e^{-\frac{a}2(t^2-s^2)}(t-s)^{2H}&\leq A_{[s,t]^2}\leq (t-s)^{2H},
\end{align*}
and
\begin{align*}
\lim_{s\uparrow t}\frac{A_{[s,t]\times
[0,s]}}{(t-s)^{2H}}=0,\qquad&\lim_{s\downarrow
0}\frac{A_{[s,t]\times [0,s]}}{(t-s)^{2H}}=0,
\end{align*}
which lead to
\begin{align*}
\lim_{s\uparrow t}\frac{\sigma_{t,s}^2}{(t-s)^{2H}}=1,\qquad
e^{-\frac{a}2t^2}t^{2H}\leq \lim_{s\downarrow
0}\frac{\sigma_{t,s}^2}{(t-s)^{2H}}\leq t^{2H}.
\end{align*}
It follows that there are two constants $c_{a,H,T},C_{a,H,T}>0$ such
that
\begin{align*}
c_{a,H,T}(t-s)^{2H}\leq \sigma_{t,s}^2\leq C_{a,H,T}(t-s)^{2H}.
\end{align*}
This completes the proof.
\end{proof}

From the lemma above, we see that
$$
\int_0^t\int_0^tE\left[(X_u^H-X_v^H)^2\right]^{-1/2}dudv<\infty
$$
holds for all $t\geq 0$, and furthermore, we can show that the
process $X^H=(X_t^H)_{0\leq t\leq T}$ is local nondeterminism for
every $0<T<\infty$, i.e. for $0\leq t_1<t_2<\cdots<t_n\leq T$,
\begin{equation}\label{sec4-eq4.3}
{\rm
Var}\left(\sum_{j=2}^nu_j(X_{t_j}^{H}-X_{t_{j-1}}^{H})\right)\geq
\kappa_0\sum_{j=2}^nu_j^2\sigma^2_{t_j,t_{j-1}}
\end{equation}
with a constant $\kappa_0>0$. Combining this with
Berman~\cite{Berman1,Berman2}, we obtain
\begin{proposition}
If $\nu=0$, then the solution $X^H$ of the equation~\eqref{1.7} has
continuous local time ${\mathscr L}_t^x$, $t\geq 0,x\in {\mathbb R}$
such that
$$
{\mathscr L}_t^x=\lim_{\varepsilon\downarrow
0}\frac1{2\varepsilon}\int_0^t1_{[x-\varepsilon,x+\varepsilon]}(X_s^H)ds
=\int_0^t\delta(X_s^H-x)ds,
$$
where $\delta(X^H_s-\cdot)$ denotes the delta function of $X_s^H$.
\end{proposition}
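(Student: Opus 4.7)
The proposition is the standard pay‑off of Berman's theory of Gaussian local times once the preparatory work of this section is in hand: I would simply check that the two hypotheses of Berman's continuity criterion are met by $X^H$, and then read off both the occupation-density representation and its $\varepsilon\downarrow 0$ realization.

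First I would record that, since $\nu=0$, the process $X^H_t=\int_0^t h(t,s)\,dB^H_s$ is centered Gaussian with covariance determined by $h$ and $\phi$. The variance-integrability hypothesis $\int_0^t\int_0^t\sigma_{u,v}^{-1}\,du\,dv<\infty$ is immediate from Lemma~\ref{lemma4.1}: it gives $\sigma_{u,v}\geq\sqrt{c_{a,H,T}}\,|u-v|^H$, and since $H<1$ the resulting integral $\int_0^t\int_0^t|u-v|^{-H}du\,dv$ is finite.

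Next I would establish the local nondeterminism bound (4.3). Using the decomposition
\[
X^H_{t_j}-X^H_{t_{j-1}}=\int_{t_{j-1}}^{t_j}h(t_j,u)\,dB^H_u+\int_0^{t_{j-1}}\bigl[h(t_j,u)-h(t_{j-1},u)\bigr]\,dB^H_u,
\]
the second integral is measurable with respect to $\mathcal F_{t_{j-1}}^{B^H}$, so conditioning on the past kills it. On $[t_{j-1},t_j]$ the kernel $h(t_j,\cdot)$ is bounded below by $e^{-aT^2/2}$, which together with Pitt's LND for fBm yields a lower bound of the form $\kappa_0\,\sigma^2_{t_j,t_{j-1}}$ for the conditional variance of the $j$-th increment given the previous ones. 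Summing over $j$ gives (4.3). I expect this to be the main obstacle, because the Volterra kernel $h(t,s)$ depends on both endpoints, so the reduction to fBm is not automatic; one has to control the cross terms coming from the "past" piece $\int_0^{t_{j-1}}[h(t_j,u)-h(t_{j-1},u)]dB^H_u$, e.g.\ by exploiting the explicit form (3.2) to bound $|h(t_j,u)-h(t_{j-1},u)|$ by $C\,(t_j-t_{j-1})$ uniformly in $u\in[0,T]$ and comparing to Lemma~\ref{lemma4.1}.

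With both hypotheses verified, Berman~\cite{Berman1,Berman2} supplies a jointly continuous occupation density $\mathscr{L}_t^x$ on $[0,T]\times\mathbb{R}$ satisfying
\[
\int_0^t f(X_s^H)\,ds=\int_{\mathbb{R}}f(x)\,\mathscr{L}_t^x\,dx
\]
for every bounded Borel $f$. Taking $f=(2\varepsilon)^{-1}\mathbf 1_{[x-\varepsilon,x+\varepsilon]}$ and letting $\varepsilon\downarrow 0$ yields the first formula in the statement, while the distributional identity $\mathscr{L}_t^x=\int_0^t\delta(X_s^H-x)\,ds$ is a formal rewriting of the same occupation-density formula. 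This completes the proof.
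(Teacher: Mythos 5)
Your proposal follows exactly the paper's route: the proposition is obtained by combining the integrability of $\sigma_{u,v}^{-1}$ (from Lemma~\ref{lemma4.1}, using $H<1$) with the local nondeterminism property~\eqref{sec4-eq4.3} and then invoking Berman's criterion. In fact you supply more detail than the paper does on the one genuinely delicate point, the verification of~\eqref{sec4-eq4.3} for the Volterra kernel $h(t,s)$, which the paper merely asserts with ``we can show''; your conditioning decomposition is the natural way to carry that out.
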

For $t\geq 0,x\in {\mathbb R}$ we now set
\begin{align*}
{\mathcal L}_t^x&=2{H}(2H-1)\int_0^t\delta(X_s^H-x)ds
\int_0^sh(s,m)(s-m)^{2H-2}dm.
\end{align*}
Then ${\mathcal L}_t^x$ is well-defined and
$$
{\mathcal L}_t^x=\int_0^t\delta(X_s^H-x)D_s^\phi X_s^Hds.
$$
The process $({\mathcal L}_t^x)_{t\geq 0}$ is called the {\it
weighted local time} of $X^H$ at $x\in {\mathbb R}$.

\begin{lemma}[Hu~\cite{Hu1}]\label{lemma3.2}
Let $Y$ be normally distributed with mean zero and variance
$\sigma^2\,(\sigma>0)$. Then the delta function $\delta(Y-\cdot)$ of
$Y$ exists uniquely and we have
\begin{equation}\label{sec3eq3.1}
\delta(Y-x)=\frac1{2\pi}\int_{\mathbb R}e^{i\xi(Y-x)}d\xi,\qquad
x\in {\mathbb R}.
\end{equation}
\end{lemma}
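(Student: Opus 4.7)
The claim says that the Donsker delta $\delta(Y-x)$ of a centered Gaussian $Y$ of variance $\sigma^{2}$ exists as a Hida distribution and admits the Fourier representation~\eqref{sec3eq3.1}; since $\delta(Y-x)$ is \emph{not} an ordinary $L^{2}(\mu)$ random variable, the right-hand side must be interpreted in a space of generalized random variables such as the Hida dual $(\mathcal{S})^{*}$. The plan is to regularize, expand in Wiener chaos, and identify the limit inside that distribution space.

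First I would replace the divergent integral by the convergent approximation
\[
\delta_{\varepsilon}(Y-x)=\frac{1}{2\pi}\int_{\mathbb R}e^{i\xi(Y-x)}e^{-\varepsilon\xi^{2}/2}d\xi=\frac{1}{\sqrt{2\pi\varepsilon}}\,e^{-(Y-x)^{2}/(2\varepsilon)},\qquad\varepsilon>0,
\]
which is a bounded random variable. Writing $Y=I_{1}(g)$ with $\|g\|^{2}=\sigma^{2}$ and invoking the Wick exponential identity
\[
e^{i\xi Y}=e^{-\sigma^{2}\xi^{2}/2}\sum_{n=0}^{\infty}\frac{(i\xi)^{n}}{n!}I_{n}(g^{\otimes n}),
\]
I would substitute into the formula for $\delta_{\varepsilon}$ and exchange the $\xi$-integral with the chaos sum; each scalar integral is absolutely convergent thanks to the Gaussian factor $e^{-(\sigma^{2}+\varepsilon)\xi^{2}/2}$, and collapses to $(-1)^{n}p_{\sigma^{2}+\varepsilon}^{(n)}(x)$, where $p_{t}(y)=(2\pi t)^{-1/2}e^{-y^{2}/(2t)}$. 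Using the Hermite identity $(-1)^{n}p_{t}^{(n)}(x)=t^{-n/2}H_{n}(x/\sqrt{t})p_{t}(x)$ and sending $\varepsilon\downarrow 0$ term by term gives the candidate
\[
\delta(Y-x)=p_{\sigma^{2}}(x)\sum_{n=0}^{\infty}\frac{H_{n}(x/\sigma)}{\sigma^{n}\,n!}\,I_{n}(g^{\otimes n}).
\]

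To certify that this formal series genuinely lies in $(\mathcal{S})^{*}$ I would apply the standard weighted-$\ell^{2}$ characterization of the Hida dual: since $\|I_{n}(g^{\otimes n})\|_{L^{2}}^{2}=n!\,\sigma^{2n}$ and $|H_{n}(x/\sigma)|$ grows no faster than $C\sqrt{n!}\,e^{x^{2}/(4\sigma^{2})}$, the weighted series $\sum_{n}q^{-n}n!\,|\mathrm{coef}_{n}|^{2}\|I_{n}(g^{\otimes n})\|_{L^{2}}^{2}$ converges for any $q$ chosen sufficiently large, placing the object in the appropriate negative-order Hida space. Uniqueness then follows from the injectivity of the $S$-transform, whose value on any candidate for $\delta(Y-x)$ is prescribed by Fourier inversion applied to the characteristic function of $Y$.

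The main obstacle is precisely this last step: verifying that the formal limit lies in $(\mathcal{S})^{*}$ and that the interchange of $\varepsilon\downarrow 0$ with the chaos summation is legitimate in the Hida topology. The Fourier manipulations themselves are entirely routine; the real content is the weighted-$\ell^{2}$ bound controlling the tail of the Hermite-weighted coefficients, which is exactly the analytic ingredient that underlies the white-noise construction of Donsker delta functionals of non-degenerate Gaussian variables.
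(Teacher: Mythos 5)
The paper never proves this lemma --- it is quoted as a black box from Hu--{\O}ksendal~\cite{Hu1} --- so there is no internal proof to compare against; what you have written is essentially the standard white-noise argument from that reference, and it is correct in outline. Your computations check out: the regularization $\delta_\varepsilon(Y-x)=p_\varepsilon(Y-x)$, the Wick-exponential expansion $e^{i\xi I_1(g)}=e^{-\sigma^2\xi^2/2}\sum_n\frac{(i\xi)^n}{n!}I_n(g^{\otimes n})$, the collapse of the $\xi$-integral to $(-1)^np^{(n)}_{\sigma^2+\varepsilon}(x)$, and the resulting expansion $\delta(Y-x)=p_{\sigma^2}(x)\sum_n\frac{H_n(x/\sigma)}{\sigma^n n!}I_n(g^{\otimes n})$ all agree with the known chaos expansion of the Donsker delta; the Cram\'er bound $|H_n(y)|\le C\sqrt{n!}\,e^{y^2/4}$ indeed gives $n!\,|\mathrm{coef}_n|^2\,\sigma^{2n}=p_{\sigma^2}(x)^2H_n(x/\sigma)^2/n!=O(n^{-1/2})$, so the series even lands in the Sobolev--Watanabe space $\mathbb{D}^{-\alpha,2}$ for $\alpha>1/2$, not merely in $(\mathcal S)^{*}$. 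Two small points you flag but leave open are genuinely needed and genuinely easy: (i) the Fubini interchange for fixed $\varepsilon>0$ follows from $\sum_n\frac{(|\xi|\sigma)^n}{\sqrt{n!}}\le C_re^{r\sigma^2\xi^2/2}$ with $1<r<1+\varepsilon/\sigma^2$, and the passage $\varepsilon\downarrow0$ is dominated convergence in the weighted $\ell^2$ norm since the coefficients $p_{\sigma^2+\varepsilon}(x)H_n(x/\sqrt{\sigma^2+\varepsilon})(\sigma^2+\varepsilon)^{-n/2}$ are bounded uniformly in $\varepsilon\in(0,1]$ by the same Cram\'er estimate; (ii) ``uniqueness'' is better tied to the defining property $\int_{\mathbb R}f(x)\delta(Y-x)\,dx=f(Y)$ for test functions $f$ (equivalently, injectivity of the $S$-transform applied to that identity) rather than to Fourier inversion of the characteristic function alone. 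Neither is an obstruction; both are carried out in~\cite{Hu1}, so your sketch reproduces the intended proof rather than an alternative to it.
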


\begin{proposition}\label{prop3.2}
Assume that $t\in [0,T]$ is given. Then ${\mathcal L}_t^x$ and
${\mathscr L}_t^x$ are square integrable for all $x\in {\mathbb R}$
and we have
\begin{equation}\label{sec3eq3.4}
E\left[\left({\mathscr L}_t^x\right)^2\right]\leq C_{a,H,T}t^{2-2H},
\end{equation}
\begin{equation}\label{sec3eq3.5}
E\left[\left({\mathcal L}_t^x\right)^2\right]\leq C_{a,H,T}t^{2H}.
\end{equation}
\end{proposition}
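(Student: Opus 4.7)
The plan is to reduce both second moments to double integrals of the joint Gaussian density of $(X_r^H,X_s^H)$ evaluated at the diagonal point $(x,x)$, to control those densities via the local non-determinism property~\eqref{sec4-eq4.3} combined with~\eqref{sec4-eq4.1} and Lemma~\ref{lemma4.1}, and to evaluate the resulting double integrals as Beta integrals. Because $X^H$ is a centred Gaussian process, only quadratic data enter the calculation.

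The key input is a pointwise upper bound on $E[\delta(X_r^H-x)\delta(X_s^H-x)]$. First I would invoke Lemma~\ref{lemma3.2} twice, writing
$$
\delta(X_r^H-x)\delta(X_s^H-x)=\frac{1}{(2\pi)^2}\int_{\mathbb R}\int_{\mathbb R}e^{i\xi(X_r^H-x)+i\eta(X_s^H-x)}d\xi\,d\eta,
$$
then applying the Gaussian characteristic function of $(X_r^H,X_s^H)$ and integrating out $(\xi,\eta)$ to recognise the joint density at $(x,x)$:
$$
E\left[\delta(X_r^H-x)\delta(X_s^H-x)\right]=\frac{1}{2\pi\sqrt{\det\Sigma_{r,s}}}\exp\!\left(-\tfrac12(x,x)\Sigma_{r,s}^{-1}(x,x)^T\right)\le\frac{1}{2\pi\sqrt{\det\Sigma_{r,s}}},
$$
where $\Sigma_{r,s}$ is the covariance matrix of $(X_r^H,X_s^H)$. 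Since $(X_r^H,X_s^H)\mapsto(X_r^H,X_s^H-X_r^H)$ is determinant preserving, $\det\Sigma_{r,s}=\det\mathrm{Cov}(X_r^H,X_s^H-X_r^H)$; applying~\eqref{sec4-eq4.3} with $t_1=0,\,t_2=r,\,t_3=s$ forces the off-diagonal entry of this covariance matrix to be strictly dominated by the Cauchy--Schwarz bound, yielding $\det\Sigma_{r,s}\ge c\,\sigma_r^2\sigma_{s,r}^2$ for some $c>0$. Combining with $\sigma_r\ge c_{a,T}\,r^H$ from~\eqref{sec4-eq4.1} and $\sigma_{s,r}\ge c_{a,H,T}^{1/2}(s-r)^H$ from Lemma~\ref{lemma4.1} produces
$$
E\left[\delta(X_r^H-x)\delta(X_s^H-x)\right]\le\frac{C_{a,H,T}}{r^H(s-r)^H},\qquad 0<r<s\le t.
$$

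For~\eqref{sec3eq3.4}, Fubini and the symmetry $r\leftrightarrow s$ give $E[({\mathscr L}_t^x)^2]=2\int_0^t\!\int_0^s E[\delta(X_r^H-x)\delta(X_s^H-x)]\,dr\,ds$; inserting the pointwise bound and computing the inner integral via $r=su$ as $B(1-H,1-H)\,s^{1-2H}$ gives the prescribed $t^{2-2H}$ rate. For~\eqref{sec3eq3.5}, the deterministic weight $\psi(s):=D_s^\phi X_s^H=2H(2H-1)\int_0^s h(s,m)(s-m)^{2H-2}dm$ satisfies $0\le\psi(s)\le 2H\,s^{2H-1}$ since $0\le h(s,m)\le 1$; the same reduction yields
$$
E[({\mathcal L}_t^x)^2]\le C_{a,H,T}\int_0^t s^{2H-1}\!\int_0^s r^{H-1}(s-r)^{-H}dr\,ds,
$$
with inner integral equal to the constant $B(H,1-H)$ (again via $r=su$), which integrates to a constant multiple of $t^{2H}$. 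The main obstacle is the LND-to-determinant step: translating the quadratic-form inequality~\eqref{sec4-eq4.3} into the sharp lower bound $\det\Sigma_{r,s}\ge c\,\sigma_r^2\sigma_{s,r}^2$. Once that is in place, the remainder is a routine Beta-integral computation, with integrability at $r=0$ and $r=s$ secured by $H\in(1/2,1)$.
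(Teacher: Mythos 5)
Your proposal is correct and follows essentially the same route as the paper: Lemma~\ref{lemma3.2} to write the second moment as a double Fourier/Gaussian integral, the local nondeterminism property~\eqref{sec4-eq4.3} to bound the resulting Gaussian quantity from below by $c\,\sigma_v^2\sigma_{u,v}^2$ (the paper does this at the level of the quadratic form ${\rm Var}(\xi X_u^H+\eta X_v^H)$ before integrating out $(\xi,\eta)$, you do it at the level of $\det\Sigma_{u,v}$ after integrating --- these are the same estimate), then Lemma~\ref{lemma4.1} and~\eqref{sec4-eq4.1} followed by a Beta-integral computation. Your explicit treatment of the weighted case via $0\le D_s^\phi X_s^H\le 2Hs^{2H-1}$ fills in the step the paper dismisses as ``similarly.''
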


\begin{proof}
We have by Lemma~\ref{lemma3.2}
\begin{align*}
E\left[\left({\mathscr L}_t^x\right)^2\right]&\leq
\frac{1}{(2\pi)^2}\int_0^t\int_0^tdudv\int_{{\mathbb
R}^2}E\left[e^{i(\xi X_u^H+\eta X_v^H)}\right]d\xi d\eta\\
&\leq \frac{2}{(2\pi)^2}\int_0^tdu\int_0^tdv\int_{{\mathbb R}^2}
e^{-\frac12{\rm Var}(\xi X_u^H+\eta X_v^H)}d\xi d\eta.
\end{align*}
Noting that
\begin{align*}
{\rm Var}(\xi X_u^H+\eta X_v^H)\geq
k\left[\xi^2\sigma_{u,v}^2+(\eta+\xi)^2\sigma_v^2\right]
\end{align*}
for a positive $k>0$ by local nondeterminacy of the process $X^H$,
we get
\begin{align*}
E\left[\left({\mathscr L}_t^x\right)^2\right]&\leq
\frac{2}{(2\pi)^2}\int_0^tdu\int_0^udv\int_{{\mathbb R}^2}
e^{-\frac{k}2(\xi^2\sigma_{u,v}^2+(\eta+\xi)^2\sigma_v^2)}
d\xi d\eta\\
&\leq \frac{1}{k\pi}\int_0^tdu\int_0^u
\frac{dv}{\sigma_{u,v}\sigma_v}\\
&\leq \frac1{k\pi
c_{a,T}}\int_0^tdu\int_0^u\frac{dv}{(u-v)^Hv^H}\leq
C_{a,H,T}T^{2-2H}.
\end{align*}
by Lemma~\ref{lemma4.1}. This obtains~\eqref{sec3eq3.4}. Similarly,
one can show that inequality~\eqref{sec3eq3.5} holds.
\end{proof}

\begin{theorem}\label{theorem3.1}
Let $X^H$ be the solution to the equation~\eqref{1.7} with Hurst
index $\frac12<H<1$, $X_0^H=z$, $\nu=0$ and let ${\mathcal L}$ be
the weighted local time of $X^H$. Suppose that $\Phi:{\mathbb
R}^+\to {\mathbb R}$ is a convex function having polynomial growth.
Then
\begin{equation}\label{sec3eq3.100}
\Phi(X_t^H)=\Phi(z)+\int_0^tD^{-}\Phi(X_s^H)dX_s^H+\int_{\mathbb
R}{\mathcal L}_t^x\mu_{\Phi}(dx),
\end{equation}
where $D^{-}\Phi$ denotes the left derivative of $\Phi$ and the
signed measure $\mu_\Phi$ is defined by
$$
\mu_{\Phi}([a,b])=D^{-}\Phi(b)-D^{-}\Phi(a),\qquad a<b,a,b\in
{\mathbb R}.
$$
\end{theorem}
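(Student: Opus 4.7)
The plan is to establish the formula first for smooth convex $\Phi$ using the It\^o-type formula (Theorem~\ref{theorem2.1}), and then extend to a general convex $\Phi$ by mollification. Concretely, fix a smooth, nonnegative, compactly supported, even mollifier $\rho$ on $\mathbb R$ with $\int\rho=1$, set $\rho_\varepsilon(y)=\varepsilon^{-1}\rho(y/\varepsilon)$, and define $\Phi_\varepsilon:=\Phi\ast\rho_\varepsilon\in C^\infty(\mathbb R)$. Convolution with a nonnegative function preserves convexity and polynomial growth, so each $\Phi_\varepsilon$ is smooth and convex with polynomial growth. Standard facts about convex functions give $\Phi_\varepsilon\to\Phi$ locally uniformly, $\Phi_\varepsilon'\to D^{-}\Phi$ at every continuity point of $D^{-}\Phi$ (so outside a countable exceptional set), and $\mu_{\Phi_\varepsilon}(dx)=\Phi_\varepsilon''(x)\,dx$ converges vaguely to $\mu_\Phi$.

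Next, rewrite~\eqref{1.7} with $\nu=0$ in the form required by Theorem~\ref{theorem2.1}: $dX_t^H=v_t\,dt+dB_t^H$ with $v_t=-a\int_0^t(X_t^H-X_u^H)\,du$, so $u_s\equiv 1$. From the displayed calculation following Proposition~\ref{th3.1}, $D_s^\phi X_s^H=2H(2H-1)\int_0^s h(s,m)(s-m)^{2H-2}\,dm$. Applying Theorem~\ref{theorem2.1} to $\Phi_\varepsilon$ yields
\begin{equation*}
\Phi_\varepsilon(X_t^H)=\Phi_\varepsilon(z)+\int_0^t\Phi_\varepsilon'(X_s^H)\,dX_s^H+\int_0^t\Phi_\varepsilon''(X_s^H)\,D_s^\phi X_s^H\,ds.
\end{equation*}
Writing $\Phi_\varepsilon''(y)=\int_{\mathbb R}\delta(y-x)\Phi_\varepsilon''(x)\,dx$ and using Fubini together with the definition of $\mathcal L_t^x$, the last term becomes
\begin{equation*}
\int_{\mathbb R}\Bigl(\int_0^t\delta(X_s^H-x)\,D_s^\phi X_s^H\,ds\Bigr)\Phi_\varepsilon''(x)\,dx=\int_{\mathbb R}\mathcal L_t^x\,\mu_{\Phi_\varepsilon}(dx),
\end{equation*}
so that the formula~\eqref{sec3eq3.100} holds with $\Phi$ replaced by the smooth $\Phi_\varepsilon$ and $D^{-}\Phi$ replaced by $\Phi_\varepsilon'$.

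It remains to pass to the limit $\varepsilon\downarrow 0$. The left-hand side converges to $\Phi(X_t^H)$ in $L^2$ by the polynomial-growth bound on $\Phi_\varepsilon$ and the finite moments of $X_t^H$, while $\Phi_\varepsilon(z)\to\Phi(z)$ trivially. For the measure term, the occupation measure of $(X_s^H)_{s\in[0,t]}$ has a.s.\ bounded support, and Berman's theory applied to the local nondeterminism~\eqref{sec4-eq4.3} together with the two-sided bound of Lemma~\ref{lemma4.1} gives joint continuity, hence continuity in $x$ with compact support, of $\mathcal L_t^x$; combined with the vague convergence $\mu_{\Phi_\varepsilon}\to\mu_\Phi$ this yields $\int_{\mathbb R}\mathcal L_t^x\,\mu_{\Phi_\varepsilon}(dx)\to\int_{\mathbb R}\mathcal L_t^x\,\mu_\Phi(dx)$. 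The delicate step, and the main obstacle, is showing $\int_0^t\Phi_\varepsilon'(X_s^H)\,dX_s^H\to\int_0^t D^{-}\Phi(X_s^H)\,dX_s^H$ and simultaneously defining the latter as an $L^2$ limit: since the fractional It\^o integral is controlled by the norm $\|\cdot\|_{{\mathbb L}_\phi^{1,2}}$, one must bound the Malliavin terms $D_r^\phi[\Phi_\varepsilon'(X_s^H)]=\Phi_\varepsilon''(X_s^H)\,D_r^\phi X_s^H$ uniformly in $\varepsilon$. Using the explicit expression of $D_r^\phi X_s^H$ from~\eqref{seceq3-1}, Fubini, and the second-moment bound $E[(\mathcal L_t^x)^2]\le C_{a,H,T}t^{2H}$ of~\eqref{sec3eq3.5}, the contribution of $\Phi_\varepsilon''$ can be rewritten as an integral against $\mathcal L_t^x\,\mu_{\Phi_\varepsilon}(dx)$ plus vanishing cross-terms, giving the required uniform estimate; dominated convergence in ${\mathbb L}_\phi^{1,2}$ then identifies the limit and completes the proof.
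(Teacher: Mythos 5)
Your overall strategy coincides with the paper's: mollify $\Phi$, apply the It\^o-type formula of Theorem~\ref{theorem2.1} to the smooth convex $\Phi_\varepsilon$, identify the second-order term with $\int_{\mathbb R}{\mathcal L}_t^x\,\mu_{\Phi_\varepsilon}(dx)$, and let $\varepsilon\downarrow 0$. (The paper mollifies with the Gaussian kernel $p_\varepsilon$ rather than a compactly supported bump; this is immaterial.) The treatment of $\Phi_\varepsilon(X_t^H)\to\Phi(X_t^H)$ and of the measure term via vague convergence of $\mu_{\Phi_\varepsilon}$ against the continuous, compactly supported $x\mapsto{\mathcal L}_t^x$ matches the paper's.

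The one place where you genuinely diverge, and where your sketch does not close, is the stochastic-integral term. You propose to obtain $\int_0^t\Phi_\varepsilon'(X_s^H)\,dX_s^H\to\int_0^tD^{-}\Phi(X_s^H)\,dX_s^H$ as a limit in ${\mathbb L}_\phi^{1,2}$ and to conclude by ``dominated convergence in ${\mathbb L}_\phi^{1,2}$''. But the candidate limit integrand $D^{-}\Phi(X_s^H)$ in general does not lie in ${\mathbb L}_\phi^{1,2}$: its Malliavin $\phi$-derivative is, formally, the measure $\mu_\Phi$ composed with $X_s^H$ times $D_r^\phi X_s^H$, so already for $\Phi(x)=|x|$ the quantity $E\int\!\!\int D_s^\phi u_tD_t^\phi u_s\,dsdt$ in~\eqref{condition1} becomes a double integral of $\delta(X_t^H-x)\delta(X_s^H-y)$ against $\mu_\Phi(dx)\mu_\Phi(dy)$, whose finiteness is a self-intersection-type estimate you have not supplied; a uniform-in-$\varepsilon$ bound on $\|\Phi_\varepsilon'(X^H)\|_{{\mathbb L}_\phi^{1,2}}$ does not by itself identify a limit inside that space. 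The paper sidesteps this by taking the limit of the $dB^H$-part only in the Hida distribution space $({\mathcal S})^{*}$, where the convergence is routine, and then reading off from the identity (all other terms being honest square-integrable random variables) what the limiting integral must be. Either adopt that weaker mode of convergence, or carry out the second-moment estimate for the limiting integrand explicitly (bounding the joint density of $(X_s^H,X_t^H)$ via local nondeterminism, in the spirit of Proposition~\ref{prop3.2}); as written, this step is a gap in your argument, albeit one the paper itself treats only in passing.
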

\begin{proof}
For $\varepsilon>0$ and $x\in {\mathbb R}$ we set
$$
\Phi_\varepsilon(x)=\int_{\mathbb R}p_\varepsilon(x-y)\Phi(y)dy\quad
(\varepsilon>0),
$$
where $p_\varepsilon(x)=\frac1{\sqrt{2\pi\varepsilon}}
e^{-\frac1{2\varepsilon}x^2} $. Then $\Phi_\varepsilon\in C^2$ and
we have $ \lim_{\varepsilon\downarrow
0}\Phi_\varepsilon(x)=\Phi(x)$, $ \lim_{\varepsilon\downarrow
0}\Phi_\varepsilon^{'}(x)=D^{-}\Phi(x) $ for all $x\in {\mathbb R}$.
It follows that for all $\varepsilon>0$
\begin{align*}
\Phi_\varepsilon(X^H_t)&=\Phi_\varepsilon(z)+\int_0^t\Phi_\varepsilon^{'}
(X^H_s)dX_s^H+
2H(2H-1)\int_0^t\Phi_\varepsilon^{''}(X_s^H){\widetilde{h}}(s)ds.
\end{align*}

On the other hand, it is easy to see that $\Phi_\varepsilon(X_t^H)$
converges to $\Phi(X_t^H)$ almost surely, and
$\int_0^t\Phi_\varepsilon^{'}(X_s^H)X_s^Hds\to
\int_0^tD^{-}\Phi(X_s^H)X_s^Hds$ a.s., and furthermore,
$\int_0^t\Phi_\varepsilon^{'}(X_s^H)dB_s^H\to
\int_0^tD^{-}\Phi(X_s^H)dB_s^H$ in ${(\mathcal S)}^{*}$.

Finally, we have as $\varepsilon\to 0$
\begin{align*}
\int_0^t\Phi_\varepsilon^{''}(X_s^H){\widetilde{h}}(s)ds
&=\int_0^tds{\widetilde{h}}(s)\int_{\mathbb
R}\Phi_\varepsilon^{''}(x)\delta(X_s^H-x)dx\\
&\to \frac{1}{2H(2H-1)}\int_{\mathbb R}{\mathcal
L}_t^x\mu_{\Phi}(dx).
\end{align*}
This completes the proof.
\end{proof}

\begin{corollary}
Let $X^H$ be the solution to the equation~\eqref{1.7} with Hurst
index $\frac12<H<1$, $X_0^H=z$, $\nu=0$ and let ${\mathcal L}$ be
the weighted local time of $X^H$. Then the Tanaka formula
\begin{equation}\label{sec3eq3.200}
|X_t^H-x|=|X_0^H-x|+\int_0^t{\rm {sign}}(X_s^H-x)dX_s^H+{\mathcal
L}_t^x
\end{equation}
holds for all $x\in {\mathbb R}$.
\end{corollary}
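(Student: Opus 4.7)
The plan is to apply the Meyer--Tanaka formula of Theorem~\ref{theorem3.1} to the specific convex function $\Phi(y):=|y-x|$, for a fixed $x\in\mathbb{R}$. This $\Phi$ is convex on $\mathbb{R}$ and has linear (hence polynomial) growth, so the hypotheses of the theorem are satisfied, and the corollary should fall out as a direct specialization.

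The work then reduces to identifying the three ingredients in the right-hand side of \eqref{sec3eq3.100}. First, the left-derivative is $D^{-}\Phi(y)={\rm sign}(y-x)$ (with the one-sided convention ${\rm sign}(0)=-1$ forced by the left-limit), so the middle term of \eqref{sec3eq3.100} becomes exactly $\int_0^t{\rm sign}(X_s^H-x)\,dX_s^H$. Second, $D^{-}\Phi$ is a step function with a single jump of size $2$ at $y=x$, so the associated signed measure defined by $\mu_\Phi([a,b])=D^{-}\Phi(b)-D^{-}\Phi(a)$ is the point mass $2\delta_x$. Consequently
$$
\int_{\mathbb R}\mathcal{L}_t^y\,\mu_\Phi(dy)=2\mathcal{L}_t^x,
$$
which, together with the normalization of the weighted local time from Section~\ref{sec4}, produces the final term of \eqref{sec3eq3.200}. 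Plugging everything into \eqref{sec3eq3.100} yields the Tanaka identity.

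There is essentially no genuine obstacle here, since all the analytic heavy lifting has already been done in Theorem~\ref{theorem3.1}. The one point worth checking is that the ambiguity in the value of ${\rm sign}(X_s^H-x)$ at the exceptional set $\{s:X_s^H=x\}$ is harmless: this set has zero Lebesgue measure almost surely, because $X^H$ is a non-degenerate continuous Gaussian process (Lemma~\ref{lemma4.1}) admitting a continuous occupation density ${\mathscr L}_t^x$, and hence the stochastic integral $\int_0^t{\rm sign}(X_s^H-x)\,dX_s^H$ is well-defined independently of the chosen value at the discontinuity.
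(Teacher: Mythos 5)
Your approach --- specializing Theorem~\ref{theorem3.1} to $\Phi(y)=|y-x|$ --- is exactly the intended one; the paper supplies no separate proof of the corollary, so this is the right route. The identification $D^{-}\Phi(y)={\rm sign}(y-x)$ and the remark that the value chosen on the set $\{s:X_s^H=x\}$ is irrelevant (it is a $ds\times d\mu$-null set, since $X_s^H$ is a nondegenerate Gaussian variable for $s>0$) are both fine.

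The one genuine problem is the factor of $2$ that you wave away with ``the normalization of the weighted local time from Section~\ref{sec4}.'' You correctly compute $\mu_\Phi=2\delta_x$, so the literal statement of Theorem~\ref{theorem3.1} gives as last term $\int_{\mathbb R}{\mathcal L}_t^y\,\mu_\Phi(dy)=2{\mathcal L}_t^x$, i.e.\ your argument proves
\[
|X_t^H-x|=|z-x|+\int_0^t{\rm sign}(X_s^H-x)\,dX_s^H+2{\mathcal L}_t^x,
\]
which is not \eqref{sec3eq3.200}. There is no ``normalization'' left to absorb the $2$: the process ${\mathcal L}_t^x$ is a fixed object already defined in Section~\ref{sec4}. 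What is actually going on is an inconsistency of conventions in the paper itself: since $D_s^\phi X_s^H=H(2H-1)\int_0^s h(s,m)(s-m)^{2H-2}dm$, the definition ${\mathcal L}_t^x=2H(2H-1)\int_0^t\delta(X_s^H-x)\,ds\int_0^s h(s,m)(s-m)^{2H-2}dm$ equals $2\int_0^t\delta(X_s^H-x)D_s^\phi X_s^H\,ds$ rather than $\int_0^t\delta(X_s^H-x)D_s^\phi X_s^H\,ds$ as the paper also claims; with that reading the It\^o correction coming from $\Phi''=2\delta_x$ is exactly ${\mathcal L}_t^x$, but then the last term of \eqref{sec3eq3.100} should have been $\tfrac12\int_{\mathbb R}{\mathcal L}_t^y\,\mu_\Phi(dy)$. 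A complete proof must commit to one convention and track this constant honestly; as written, your final step asserts the cancellation without exhibiting it, and under the literal statements of Theorem~\ref{theorem3.1} and of the definition of ${\mathcal L}$ the conclusion you reach differs from \eqref{sec3eq3.200} by a factor of $2$ in the local time term.
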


\section{Self-intersection local time on ${\mathbb R}^2$}
\label{sec5}%%%%%%%%%
%%%%%%%%%%%%%%%%%%%%%%%%%%%%%%%%%%%%%%%%%%%%%%%%%%%%%%%%%%%%%%%%%%
%%%%%%%%%%%%%%%%%%%%%%%%%%%%%%%%%%%%%%%%%%%%%%%%%%%%%%%%%%%%%%%%%%
%%%%%%%%%%%%%%%%%%%%%%%%%%%%%%%%%%%%%%%%%%%%%%%%%%%%%%%%%%%%%%%%%%
In this section, we shall use the idea of Hu~\cite{Hu0} (see also
Hu-Nualart~\cite{Hu2}) to study the the renormalized
self-intersection local time of the linear fractional
self-attracting diffusion $X^H=(X^{H,1},X^{H,2})$ on ${\mathbb
R}^2$, where $X^{H,j}\;(j=1,2)$ is the solution of the equation
$$
X_t^{H,j}=B^{H,j}_t-a\int_0^t\int_0^u(X^{H,j}_u-X^{H,j}_v)dvdu
,\qquad 0\leq t\leq T
$$
with $a>0$ and two independent fractional Brownian motions
$B^{H,j}_{t},\,j=1,2$. Then we have
$$
X_t^{H,j}=\int_0^th(t,s)dB^{H,j}_t,\qquad j=1,2
$$
from Section~\ref{sec3}, and for all $s,t\geq 0$
$$
h(t,s)=
 \begin{cases}
 1-ase^{\frac12as^2}\int_s^te^{-\frac12au^2}du, &\text{$t\geq s$},\\
      0,&\text{$t<s$}.
 \end{cases}
$$

The renormalized self-intersection local time $\beta_T^H$ of the
process
$$
X^H_t=(X^{H,1}_t,X^{H,2}_t),\qquad 0\leq t\leq T
$$
is formally defined as
$$
\beta_T^H=\int_0^T\int_0^t\delta_0(X^H_t-X^H_s)dsdt-E\left[
\int_0^T\int_0^t\delta_0(X^H_t-X^H_s)dsdt\right],
$$
where $\delta_0$ is the delta function. For $\varepsilon>0$ we
define
$$
\beta_T^{H,\varepsilon}=
\int_0^T\int_0^tp_\varepsilon(X^H_t-X^H_s)dsdt,
$$
where
$$
p_\varepsilon(x)=\frac1{2\pi\varepsilon}
e^{-\frac{|x|^2}{2\varepsilon}},\quad x\in {\mathbb R}^2
$$
is the heat kernel. Then main object of this section is to explain
and prove Theorem~\ref{th5.1}.
\begin{theorem}\label{th5.1}
The random variable
$\beta_T^{H,\varepsilon}-E\left[\beta_T^{H,\varepsilon}\right]$
converges in $L^2$ as $\varepsilon$ tends to zero if
$\frac12<H<\frac3{4}$.
\end{theorem}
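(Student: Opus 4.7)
The strategy is to establish that $\{\beta_T^{H,\varepsilon}-E[\beta_T^{H,\varepsilon}]\}_{\varepsilon>0}$ is Cauchy in $L^2(\mu)$ by writing the covariance
$$
J(\varepsilon,\varepsilon'):=E\Bigl[\bigl(\beta_T^{H,\varepsilon}-E[\beta_T^{H,\varepsilon}]\bigr)\bigl(\beta_T^{H,\varepsilon'}-E[\beta_T^{H,\varepsilon'}]\bigr)\Bigr]
$$
as an explicit quadruple integral and proving that it converges to a finite limit as $\varepsilon,\varepsilon'\downarrow 0$.

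The first step is to use the Fourier representation
$$
p_\varepsilon(x)=\frac{1}{(2\pi)^2}\int_{\mathbb{R}^2}e^{i\xi\cdot x-\frac{\varepsilon}{2}|\xi|^2}\,d\xi,\qquad x\in\mathbb{R}^2,
$$
together with the fact that the two scalar components $X^{H,1},X^{H,2}$ are independent copies of the same centered Gaussian process. For $(s,t),(s',t')\in D_T:=\{(u,v):0\le u\le v\le T\}$, set $\mu=\sigma_{t,s}^2$, $\nu=\sigma_{t',s'}^2$ and $\rho=E[(X^{H,1}_t-X^{H,1}_s)(X^{H,1}_{t'}-X^{H,1}_{s'})]$. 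A direct Gaussian integration then yields
$$
E[p_\varepsilon(X^H_t-X^H_s)p_{\varepsilon'}(X^H_{t'}-X^H_{s'})]=\frac{1}{(2\pi)^2\bigl[(\mu+\varepsilon)(\nu+\varepsilon')-\rho^2\bigr]},
$$
and the marginal $E[p_\varepsilon(X^H_t-X^H_s)]=\tfrac{1}{2\pi(\mu+\varepsilon)}$, so after subtraction
$$
J(\varepsilon,\varepsilon')=\frac{1}{(2\pi)^2}\iint_{D_T}\iint_{D_T}\frac{\rho^2\,ds\,dt\,ds'\,dt'}{(\mu+\varepsilon)(\nu+\varepsilon')\bigl[(\mu+\varepsilon)(\nu+\varepsilon')-\rho^2\bigr]}.
$$

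The integrand is monotone non-increasing in each of $\varepsilon,\varepsilon'$ and converges pointwise a.e.\ to $\rho^2/[\mu\nu(\mu\nu-\rho^2)]$, so monotone convergence reduces the finiteness of $J_0:=\lim_{\varepsilon,\varepsilon'\downarrow 0}J(\varepsilon,\varepsilon')$ to the bound
$$
K:=\iint_{D_T}\iint_{D_T}\frac{\rho^2}{\mu\nu(\mu\nu-\rho^2)}\,ds\,dt\,ds'\,dt'<\infty.
$$
Once $K<\infty$ is in hand, the identity $\|(\beta_T^{H,\varepsilon}-E\beta_T^{H,\varepsilon})-(\beta_T^{H,\varepsilon'}-E\beta_T^{H,\varepsilon'})\|_2^2=J(\varepsilon,\varepsilon)-2J(\varepsilon,\varepsilon')+J(\varepsilon',\varepsilon')\to 0$ delivers the desired $L^2$ convergence.

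The finiteness of $K$ is the heart of the proof and the only place the restriction $H<3/4$ is used. By Lemma~\ref{lemma4.1} one has $\mu\asymp(t-s)^{2H}$ and $\nu\asymp(t'-s')^{2H}$; applying the local nondeterminism inequality~\eqref{sec4-eq4.3} to the reordered four-tuple obtained from $\{s,t,s',t'\}$ provides a lower bound
$$
\mu\nu-\rho^2\;\ge\;C\bigl[\ell_1^{2H}\ell_2^{2H}+\ell_1^{2H}\ell_3^{2H}+\ell_2^{2H}\ell_3^{2H}\bigr],
$$
where $\ell_1,\ell_2,\ell_3>0$ are the lengths of the three consecutive sub-intervals generated by the ordering. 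Using the crude bound $\rho^2\le\mu\nu$ to cancel one factor of $\mu\nu$ in the denominator, and splitting $K$ according to the six possible orderings of $s,s',t,t'$, each piece is dominated after a change of variables by an integral of the form $\int_0^T\!\int_0^T\!\int_0^T[\ell_1^{2H}\ell_2^{2H}+\ell_1^{2H}\ell_3^{2H}+\ell_2^{2H}\ell_3^{2H}]^{-1}\,d\ell_1\,d\ell_2\,d\ell_3$; a polar-type power count shows this integral converges precisely when $4H<3$. Carrying out this estimate carefully, and verifying that no extra divergence arises from the boundary regions where two of the four times coincide, is the delicate technical point.
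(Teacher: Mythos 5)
Your overall architecture coincides with the paper's: the Fourier representation of $p_\varepsilon$, the explicit Gaussian computation of the second moments, and the reduction of everything to the finiteness of the singular integral $K=\int_{\mathbb T}\rho^2\,[\mu\nu(\mu\nu-\rho^2)]^{-1}$ (the paper's Lemma~\ref{lem5.4}, in its notation $\int_{\mathbb T}\mu^2[d_H\,\sigma^2_{t,s}\sigma^2_{t',s'}]^{-1}$). Your two-parameter covariance $J(\varepsilon,\varepsilon')$ is in fact a cleaner route to the Cauchy property than the paper's assertion that convergence of the variance suffices. The problem is the step you yourself flag as ``the heart of the proof'': the crude bound $\rho^2\le\mu\nu$ followed by the three-term local nondeterminism lower bound does not work, and this is a genuine gap, not a technicality.

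Concretely, the lower bound $\mu\nu-\rho^2\ge C[\ell_1^{2H}\ell_2^{2H}+\ell_1^{2H}\ell_3^{2H}+\ell_2^{2H}\ell_3^{2H}]$ holds only in the interleaved ordering $s<s'<t<t'$. In the nested ordering $s'<s<t<t'$, writing $\ell_1=s-s'$, $\ell_2=t-s$, $\ell_3=t'-t$, local nondeterminism applied to the consecutive increments gives only $\mu\nu-\rho^2\ge C\,\ell_2^{2H}(\ell_1^{2H}+\ell_3^{2H})$ --- the cross term $\ell_1^{2H}\ell_3^{2H}$ is genuinely absent, since the covariance matrix degenerates as $\ell_2\to0$ regardless of $\ell_1,\ell_3$. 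After discarding $\rho^2\le\mu\nu$ your integrand is then only bounded by $[\ell_2^{2H}(\ell_1^{2H}+\ell_3^{2H})]^{-1}$, whose $\ell_2$-integral $\int_0^T\ell_2^{-2H}d\ell_2$ diverges for every $H\ge1/2$. The same failure occurs for disjoint intervals $s<t<s'<t'$, where the best local-nondeterminism bound is $\mu\nu-\rho^2\ge\kappa(t-s)^{2H}(t'-s')^{2H}$ and the crude bound leaves you with the divergent $\int(t-s)^{-2H}\,ds\,dt$. In these two configurations one cannot afford to throw away the smallness of $\rho$: the proof must use a quantitative decay estimate for the covariance of increments over nested or separated intervals, namely $\rho\le C[|t-s'|^{2H}-|t'-t|^{2H}+|t'-s|^{2H}-|s-s'|^{2H}]$, which is exactly what the paper's Lemmas~\ref{lem5.2} and~\ref{lem5.3} establish for this non-stationary process (via the kernel decomposition of $h^{*}$), and which then feeds into the delicate case analysis of Lemma~15 of Hu--Nualart to produce the threshold $H<3/4$. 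Your polar power count for the three-term integral is correct as far as it goes, but it only covers the interleaved region; the binding difficulty, and the actual source of the condition $H<3/4$, lies in the regions your argument does not control.
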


In order to prove the theorem we need some preliminaries. For $t\geq
s\geq 0,t^{'}\geq s^{'}\geq 0$ we now denote
$$
\sigma^2_{t,s}=E\left(X^{H,1}_t-X^{H,1}_s\right)^2,\quad
\mu=E(X^{H,1}_t-X^{H,1}_s)(X^{H,1}_{t^{'}}-X^{H,1}_{s^{'}})
$$
and
$$
d_H(s,t,s^{'},t^{'})=\sigma^2_{s,t}\sigma^2_{s^{'},t^{'}}- \mu^2.
$$
Then, by Lemma~\ref{lemma4.1} and Hu~\cite{Hu0} one can establish
the following lemma.
\begin{lemma}\label{lem5.1}

(1) For $0<s<s^{'}<t<t^{'}<T$, we have
\begin{equation}\label{sec5-eq3.4}
d_H(s,t,s^{'},t^{'})\geq
\kappa\left[(t-s)^{2H}(t^{'}-t)^{2H}+(t^{'}-s^{'})^{2H}
(s^{'}-s)^{2H}\right].
\end{equation}

(2) For $0<s^{'}<s<t<t^{'}<T$, we have
\begin{equation}\label{eq3.5}
d_H(s,t,s^{'},t^{'})\geq \kappa(t-s)^{2H}(t^{'}-s^{'})^{2H}.
\end{equation}

(3) For $0<s<t<s^{'}<t^{'}<T$, we have
\begin{equation}\label{eq3.6}
d_H(s,t,s^{'},t^{'})\geq \kappa(t-s)^{2H}(t^{'}-s^{'})^{2H},
\end{equation}
where $\kappa>0$ is an enough small constant.
\end{lemma}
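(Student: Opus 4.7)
The quantity $d_H(s,t,s',t')$ is the determinant of the $2\times2$ covariance matrix $\Sigma$ of the centred Gaussian pair $(Y_1,Y_2):=(X^{H,1}_t-X^{H,1}_s,\,X^{H,1}_{t'}-X^{H,1}_{s'})$. The plan is to bypass any explicit manipulation of the kernel $h(t,\cdot)$ and instead invoke the local nondeterminism inequality~\eqref{sec4-eq4.3} for $X^{H,1}$ already recorded in Section~\ref{sec4} (which itself rests on Lemma~\ref{lemma4.1}). In each of the three orderings of $\{s,t,s',t'\}$ I introduce the consecutive disjoint sub-intervals they cut out, together with the corresponding sub-increments $\xi_1,\xi_2,\dots$, and express $Y_1,Y_2$ as linear combinations of the $\xi_j$. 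Feeding $uY_1+vY_2$ into~\eqref{sec4-eq4.3} produces a quadratic lower bound $(u,v)\Sigma(u,v)^{\top}\ge\kappa_0(u,v)M(u,v)^{\top}$ for an explicit positive semi-definite matrix $M$ whose entries are built from the $\sigma^{2}_{\xi_j}$. Since $A\succeq B\succeq 0$ forces $\det A\ge\det B$ for $2\times 2$ matrices, $d_H\ge\kappa_0^{2}\det M$, and Lemma~\ref{lemma4.1} then replaces each $\sigma^{2}_{\xi_j}$ by a constant multiple of the $2H$-th power of the length of the corresponding sub-interval.

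\textbf{Cases (1) and (3).} In the disjoint ordering $s<t<s'<t'$, the variables $Y_1$ and $Y_2$ are themselves two non-overlapping sub-increments, so~\eqref{sec4-eq4.3} directly produces a diagonal $M$ with $\det M=\sigma^{2}_{t,s}\sigma^{2}_{t',s'}$, and Lemma~\ref{lemma4.1} yields~\eqref{eq3.6}. In the interleaved ordering $s<s'<t<t'$, with $\xi_1=X^{H,1}_{s'}-X^{H,1}_{s}$, $\xi_2=X^{H,1}_{t}-X^{H,1}_{s'}$, $\xi_3=X^{H,1}_{t'}-X^{H,1}_{t}$, one has $Y_1=\xi_1+\xi_2$ and $Y_2=\xi_2+\xi_3$; the tridiagonal matrix $M$ has determinant $\sigma^{2}_{\xi_1}\sigma^{2}_{\xi_2}+\sigma^{2}_{\xi_1}\sigma^{2}_{\xi_3}+\sigma^{2}_{\xi_2}\sigma^{2}_{\xi_3}$. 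The elementary inequality $(\alpha+\beta)^{2H}\le 2^{2H-1}(\alpha^{2H}+\beta^{2H})$, valid since $2H>1$, applied to $t-s=(s'-s)+(t-s')$ and to $t'-s'=(t-s')+(t'-t)$, shows that the target combination $(t-s)^{2H}(t'-t)^{2H}+(t'-s')^{2H}(s'-s)^{2H}$ is dominated by a constant multiple of this three-term sum, which proves~\eqref{sec5-eq3.4}.

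\textbf{Case (2) and the main obstacle.} In the nested ordering $s'<s<t<t'$, with $\xi_1=X^{H,1}_s-X^{H,1}_{s'}$, $\xi_2=X^{H,1}_t-X^{H,1}_s$, $\xi_3=X^{H,1}_{t'}-X^{H,1}_t$, one has $Y_1=\xi_2$ and $Y_2=\xi_1+\xi_2+\xi_3$. Substituting $uY_1+vY_2=v\xi_1+(u+v)\xi_2+v\xi_3$ into~\eqref{sec4-eq4.3} gives
\[
\mathrm{Var}(uY_1+vY_2)\ \ge\ \kappa_0\bigl[(u+v)^{2}\sigma^{2}_{\xi_2}+v^{2}(\sigma^{2}_{\xi_1}+\sigma^{2}_{\xi_3})\bigr],
\]
so $\det M=\sigma^{2}_{\xi_2}(\sigma^{2}_{\xi_1}+\sigma^{2}_{\xi_3})$ and Lemma~\ref{lemma4.1} delivers the intermediate bound $d_H\ge c\,(t-s)^{2H}[(s-s')^{2H}+(t'-t)^{2H}]$. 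The step I expect to be the hardest is to bridge from this natural LND output to the stated right-hand side $\kappa(t-s)^{2H}(t'-s')^{2H}$ in~\eqref{eq3.5}, since $t'-s'=(s-s')+(t-s)+(t'-t)$ may be dominated by its middle term $t-s$ and then $(t'-s')^{2H}$ is not controlled by $(s-s')^{2H}+(t'-t)^{2H}$ alone. I would split the analysis into the regime $t-s\le (s-s')+(t'-t)$, in which $(t'-s')^{2H}\le C[(s-s')^{2H}+(t'-t)^{2H}]$ and the LND bound already yields~\eqref{eq3.5}, and its complement, where one appeals to the sharper case-(2) estimate for fractional Brownian motion from Hu~\cite{Hu0} and transfers it to $X^{H,1}$ using that the kernel $h(t,\cdot)$ in~\eqref{seceq3-2} is uniformly bounded between positive constants on $[0,T]$ in view of~\eqref{sec4-eq4.1}. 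This second regime is the delicate point of the proof.
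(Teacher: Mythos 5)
Your strategy for parts (1) and (3) is sound and complete: applying the local nondeterminism inequality \eqref{sec4-eq4.3} to the consecutive sub-increments, passing from $\Sigma\succeq\kappa_0 M$ to $\det\Sigma\ge\kappa_0^2\det M$, invoking Lemma~\ref{lemma4.1}, and using $(\alpha+\beta)^{2H}\le 2^{2H-1}(\alpha^{2H}+\beta^{2H})$ do give \eqref{sec5-eq3.4} and \eqref{eq3.6}. This is in fact more than the paper supplies: its entire justification of Lemma~\ref{lem5.1} is the one-sentence appeal to Lemma~\ref{lemma4.1} and Hu~\cite{Hu0}, so your LND computation is the honest, self-contained version of that citation (granting, as the paper also does, that \eqref{sec4-eq4.3} holds for $X^{H,1}$).

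The genuine problem is the ``delicate second regime'' of case (2), and you have located it correctly but proposed a bridge that cannot be built: inequality \eqref{eq3.5} as stated is false. Fix $s<t$ and let $s'\uparrow s$, $t'\downarrow t$ in the nested configuration. By the $L^2$-continuity of $X^{H,1}$ (the upper bound in Lemma~\ref{lemma4.1}), $\mu\to\sigma^2_{t,s}$ and $\sigma^2_{t',s'}\to\sigma^2_{t,s}$, so $d_H(s,t,s',t')\to 0$, while the right-hand side of \eqref{eq3.5} tends to $\kappa(t-s)^{4H}>0$. The same degeneration already defeats the inequality for standard Brownian motion, where in the nested case $d_H=(t-s)\bigl[(s-s')+(t'-t)\bigr]$, which is not bounded below by $\kappa(t-s)(t'-s')$ when $t-s$ dominates. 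Consequently there is no ``sharper case-(2) estimate'' in Hu~\cite{Hu0} or Hu--Nualart~\cite{Hu2} to transfer: what those references actually prove in the nested case is precisely the intermediate bound your LND computation delivers, namely $d_H\ge c\,(t-s)^{2H}\bigl[(s-s')^{2H}+(t'-t)^{2H}\bigr]$, and it is this weaker (correct) bound that feeds into the integrability argument of Lemma~\ref{lem5.4}. So your argument is complete and correct up to the attempted upgrade to the stated form of (2); the remaining gap is a defect of the statement, not of your proof, and the right repair is to replace \eqref{eq3.5} by the bound you derived rather than to pursue the second regime.
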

\begin{lemma}\label{lem5.2}
For $0\leq x<y\leq T$ we set
\begin{align*}
h^{*}&(y,x,u,v)\\
&=\!\left[h(y,u)1_{(0,y]}(u)-h(x,u)1_{(0,x]}(u)\right]\! \left[
h(y,v)1_{(0,y]}(v)-h(x,v)1_{(0,x]}(v)\right],
\end{align*}
where $h(\cdot,\,\cdot)$ is defined in Section~\ref{sec3}. Then we
have
\begin{align*}
\int_0^{t^{'}}\int_0^{t^{'}}\bigl[h^{*}(t^{'},s,u,v)-
&h^{*}(t^{'},t,u,v)\bigr]\phi(u,v)dudv\\
&\leq C_{a,H,T}\left[(t^{'}-s)^{2H}-(t^{'}-t)^{2H}\right]
\end{align*}
for all $0\leq s\leq t\leq t^{'}\leq T$.
\end{lemma}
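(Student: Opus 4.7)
The plan is to begin by recognizing, via formula~\eqref{sec4-eq4.100}, that the integral on the left equals $\sigma_{t',s}^2 - \sigma_{t',t}^2$. The decomposition $X^H_{t'} - X^H_s = (X^H_{t'} - X^H_t) + (X^H_t - X^H_s)$, after squaring and taking expectations, yields
\begin{equation*}
\sigma^2_{t',s} - \sigma^2_{t',t} = \sigma^2_{s,t} + 2E\bigl[(X^H_t - X^H_s)(X^H_{t'} - X^H_t)\bigr].
\end{equation*}
By Lemma~\ref{lemma4.1}, $\sigma^2_{s,t}\leq C_{a,H,T}(t-s)^{2H}$, and since $x\mapsto x^{2H}$ is superadditive on $[0,\infty)$ for $H\geq 1/2$ (it is convex with $0^{2H}=0$), one has $(t-s)^{2H} \leq (t'-s)^{2H} - (t'-t)^{2H}$. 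So the $\sigma^2_{s,t}$ term already fits in the desired right-hand side, and the proof reduces to estimating the cross term $2E[AB]$, where $A := X^H_t - X^H_s$ and $B := X^H_{t'} - X^H_t$.

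For the cross term, I express $A$ and $B$ as Wiener integrals against their explicit kernels $\alpha$ and $\beta$ and split $E[AB] = \int_0^{t'}\int_0^{t'} \alpha(u)\beta(v)\phi(u,v)\,du\,dv$ over the four rectangles determined by $u\in(0,s]\cup(s,t]$ and $v\in(0,t]\cup(t,t']$, producing contributions $I_{11}, I_{12}, I_{21}, I_{22}$. The clean identity $h(y,u)-h(x,u) = -au\,e^{au^2/2}\int_x^y e^{-aw^2/2}\,dw$ valid for $0\leq u\leq x\leq y$ shows that the integrands of $I_{12}$ and $I_{21}$ have a fixed (non-positive) sign, so these pieces can be discarded and $E[AB]\leq I_{11}+I_{22}$. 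Using $\int_s^t e^{-aw^2/2}dw\leq t-s$, $\int_t^{t'}e^{-aw^2/2}dw\leq t'-t$, the crude pointwise bound $ue^{au^2/2}\leq Te^{aT^2/2}$, and the finiteness of $\int_0^T\int_0^T\phi(u,v)\,du\,dv = T^{2H}$, one obtains $I_{11}\leq C_{a,H,T}(t-s)(t'-t)$. For $I_{22}$, the bound $0\leq h\leq 1$ and the standard fBm covariance identity give
\begin{equation*}
I_{22}\leq \int_s^t\int_t^{t'}\phi(u,v)\,du\,dv = \tfrac12\bigl[(t'-s)^{2H}-(t'-t)^{2H}-(t-s)^{2H}\bigr],
\end{equation*}
which again fits inside $C_{a,H,T}\bigl[(t'-s)^{2H}-(t'-t)^{2H}\bigr]$.

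The only delicate ingredient, which I regard as the main obstacle, is the elementary inequality $(t-s)(t'-t)\leq T^{2-2H}\bigl[(t'-s)^{2H}-(t'-t)^{2H}\bigr]$. Setting $p:=t-s$ and $q:=t'-t$, it reads $pq\leq T^{2-2H}\bigl[(p+q)^{2H}-q^{2H}\bigr]$ for $p,q\in[0,T]$, and I would prove it by splitting into two cases. If $q\geq p$, the mean value theorem gives $(p+q)^{2H}-q^{2H}\geq 2Hpq^{2H-1}$, whence $pq \leq \tfrac{q^{2-2H}}{2H}\bigl[(p+q)^{2H}-q^{2H}\bigr]\leq \tfrac{T^{2-2H}}{2H}\bigl[(p+q)^{2H}-q^{2H}\bigr]$. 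If $q\leq p$, the superadditivity bound $(p+q)^{2H}-q^{2H}\geq p^{2H}$ combined with $pq\leq p^{2} = p^{2H}\cdot p^{2-2H}\leq T^{2-2H}p^{2H}$ yields the same estimate. Combining this geometric inequality with the bounds on $\sigma^2_{s,t}$, $I_{11}$ and $I_{22}$ above then produces the claimed estimate with an effective constant $C_{a,H,T}$.
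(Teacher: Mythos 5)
Your proof is correct, and it takes a genuinely different (and cleaner) route than the paper's. You identify the left-hand side with $\sigma^2_{t',s}-\sigma^2_{t',t}$ and polarize it as $\sigma^2_{t,s}+2E[AB]$ with $A=X^H_t-X^H_s$, $B=X^H_{t'}-X^H_t$: the term $\sigma^2_{t,s}$ is absorbed by Lemma~\ref{lemma4.1} together with the superadditivity of $x\mapsto x^{2H}$ for $H\geq\frac12$, while the cross term is handled by a sign analysis of the two kernels on the four rectangles, discarding the two nonpositive pieces and bounding $I_{11}$ crudely and $I_{22}$ via the fBm covariance identity. The paper instead expands $h^{*}(t',s,u,v)-h^{*}(t',t,u,v)$ directly into diagonal, cross and ``$\Delta$'' contributions, and justifies the bound on the $\Delta$ piece only by checking that two limiting ratios (as $s\downarrow 0$ and as $s\uparrow t$) vanish, leaving the uniformity in $(s,t,t')$ implicit. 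Your substitute for that soft step --- the explicit two-case inequality $pq\leq \frac{1}{2H}T^{2-2H}\bigl[(p+q)^{2H}-q^{2H}\bigr]$ --- is fully quantitative and is exactly the elementary fact needed to close the argument; the only extra inputs you use (the Wiener-integral representation of $E[AB]$ and $\int_s^t\int_t^{t'}\phi(u,v)\,du\,dv=\frac12[(t'-s)^{2H}-(t'-t)^{2H}-(t-s)^{2H}]$) are standard in the paper's framework. In short, both proofs rest on essentially the same partition of $(0,t']^2$, but your polarization bookkeeping yields an explicit constant and avoids the paper's unproved uniformity claim.
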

\begin{proof} For $0<u,v<T$ we have
\begin{align*}
h(t^{'},u)&1_{(0,t^{'}]}(u)-h(s,u)1_{(0,s]}(u)
=h(t^{'},u)1_{(s,t^{'}]}(u)+\left[h(t^{'},u)-h(s,u)
\right]1_{(0,s]}(u),
\end{align*}
and
\begin{align*}
h(t^{'},v)&1_{(0,t^{'}]}(v)-h(s,v)1_{(0,s]}(v)
=h(t^{'},v)1_{(s,t^{'}]}(v)+\left[h(t^{'},v)-h(s,v)
\right]1_{(0,s]}(v).
\end{align*}
So,
\begin{align*}
h^{*}(t^{'},s,u,v)&=h(t^{'},u)h(t^{'},v)1_{(s,t^{'}]^2}(u,v)\\
&+\left[h(t^{'},u)-h(s,u)\right]
\left[h(t^{'},v)-h(s,v)\right]1_{(0,s]^2}(u,v)\\
&-h(t^{'},u)\left[h(s,v)-h(t^{'},v)
\right]1_{(s,t^{'}]}(u)1_{(0,s]}(v)\\
&-h(t^{'},v)\left[h(s,u)-h(t^{'},u)
\right]1_{(0,s]}(u)1_{(s,t^{'}]}(v).
\end{align*}
Similarly, we also have
\begin{align*}
h^{*}(t^{'},t,u,v)&=h(t^{'},u)h(t^{'},v)1_{(t,t^{'}]^2}(u,v)\\
&+\left[h(t^{'},u)-h(t,u)\right]
\left[h(t^{'},v)-h(t,v)\right]1_{(0,t]^2}(u,v)\\
&-h(t^{'},u)\left[h(t,v)-h(t^{'},v)
\right]1_{(t,t^{'}]}(u)1_{(0,t]}(v)\\
&-h(t^{'},v)\left[h(t,u)-h(t^{'},u)
\right]1_{(0,t]}(u)1_{(t,t^{'}]}(v).
\end{align*}

On the other hand, for all $0<u,v\leq s\leq t\leq t^{'}\leq T$ we
set
\begin{align*}
\Delta(t^{'},t,s,u,v)\equiv[h(s,u)&-h(t^{'},u)][h(s,v)-h(t^{'},v)]
1_{(0,s]^2}(u,v)\\
&-[h(t,u)-h(t^{'},u)][h(t,v)-h(t^{'},v)]1_{(0,t]^2}(u,v)\\
&\hspace{-2cm}=a^2uve^{\frac{a}2(u^2-v^2)}\left\{\left(\int_s^{t^{'}}
e^{-\frac{a}2w^2}dw\right)^2-\left(\int_t^{t^{'}}
e^{-\frac{a}2w^2}dw\right)^2\right\}.
\end{align*}
Then for all $s<t\leq t^{'}\leq T$, we have
\begin{align*}
\lim_{s\downarrow 0}\frac{1}{(t^{'}-s)^{2H}-(t^{'}-t)^{2H}}
\int_0^{s}\int_0^{s}\Delta(t^{'},t,s,u,v)\phi(u,v)dudv=0
\end{align*}
and
\begin{align*}
\lim_{s\uparrow t}\frac{1}{(t^{'}-s)^{2H}-(t^{'}-t)^{2H}}
\int_0^{s}\int_0^{s}\Delta(t^{'},t,s,u,v)\phi(u,v)dudv=0,
\end{align*}
which implies that there is a constant $C_{a,H,T}>0$ such that
$$
\int_0^{s}\int_0^{s}\Delta(t^{'},t,s,u,v)\phi(u,v)dudv\leq
C_{a,H,T}\left[(t^{'}-s)^{2H}-(t^{'}-t)^{2H}\right].
$$
Combining these with
$$
0\leq h(t,u)-h(t^{'},u)\leq h(s,u)-h(t^{'},u)\leq 2,\qquad 0\leq
u\leq s\leq t,
$$
we get
\begin{align*}
\int_0^{t^{'}}&\int_0^{t^{'}}\bigl[h^{*}(t^{'},s,u,v)-
h^{*}(t^{'},t,u,v)\bigr]\phi(u,v)dudv\\
&\leq\int_0^{t^{'}}\int_0^{t^{'}}
h(t^{'},u)h(t^{'},v)\left(1_{(s,t^{'}]^2}(u,v)
-1_{(t,t^{'}]^2}(u,v)\right)\phi(u,v)dudv\\
&\qquad\quad +4\int_t^{t'}\int_s^t\phi(u,v)dudv\\
&\qquad\qquad+\int_0^{s}\int_0^{s}\Delta(t^{'},t,s,u,v)\phi(u,v)dudv\\
&\leq C_{a,H,T}\left[(t^{'}-s)^{2H}-(t^{'}-t)^{2H}\right].
\end{align*}
This completes the proof.
\end{proof}

The proof similar to Lemma~\ref{lem5.2}, by decomposing the function
$$
h^{*}(t^{'},s,u,v)-
h^{*}(t^{'},t,u,v)+h^{*}(s^{'},t,u,v)-h^{*}(s^{'},s,u,v),
$$
one can show that the following lemma holds for all $0\leq s\leq
t\leq s^{'}\leq t^{'}\leq T$.
\begin{lemma}\label{lem5.3}
Under the assumptions of Lemma~\ref{lem5.2}, for all $0\leq s\leq
t\leq s^{'}\leq t^{'}\leq T$ we have
\begin{align*}
\int_0^{t^{'}}\int_0^{t^{'}}&\bigl[h^{*}(t^{'},s,u,v)-
h^{*}(t^{'},t,u,v)+h^{*}(s^{'},t,u,v)-h^{*}(s^{'},s,u,v)
\bigr]\phi(u,v)dudv\\
&\leq C_{a,H,T}\left[(t^{'}-s)^{2H}-(t^{'}-t)^{2H}+
(s^{'}-t)^{2H}-(s^{'}-s)^{2H}\right].
\end{align*}
\end{lemma}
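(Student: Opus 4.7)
The plan is to mirror the proof of Lemma~\ref{lem5.2}, keeping the same four-piece decomposition of $h^{*}(y,x,u,v)$ but tracking the rectangle-increment structure through each piece. For $0 \leq x \leq y$, write
\[
h^{*}(y,x,u,v) = P_{1} + P_{2} + P_{3} + P_{4},
\]
where $P_{1} = h(y,u)h(y,v)1_{(x,y]^{2}}(u,v)$, $P_{2} = [h(y,u)-h(x,u)][h(y,v)-h(x,v)]1_{(0,x]^{2}}(u,v)$, and $P_{3}, P_{4}$ are the two cross pieces supported on $(x,y]\times(0,x]$ and $(0,x]\times(x,y]$. I would apply this decomposition to each of $h^{*}(t',s)$, $h^{*}(t',t)$, $h^{*}(s',t)$, $h^{*}(s',s)$ and then regroup the sixteen resulting contributions by type before integrating against $\phi$.

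For the $P_{1}$ group, collecting indicators gives an alternating sum over four rectangles. Using $h(\cdot,\cdot) \leq 1$ together with the identity $2\int_{a}^{b}\!\int_{c}^{d}\phi(u,v)\,dudv = |b-c|^{2H}+|a-d|^{2H}-|a-c|^{2H}-|b-d|^{2H}$, the $\phi$-integral of this group collapses, through applications of this identity at the corner points $s, t, s', t'$, to exactly the desired polynomial second-order difference $[(t'-s)^{2H}-(t'-t)^{2H}]+[(s'-t)^{2H}-(s'-s)^{2H}]$.

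For the $P_{2}$ group I would follow the explicit $\Delta$-computation from Lemma~\ref{lem5.2}. On the nested regions $(0,s]^{2} \subset (0,t]^{2} \subset (0,s']^{2}$ the four copies of $P_{2}$ assemble into an integrand of the form $a^{2}uv\,e^{a(u^{2}+v^{2})/2}\,\Lambda(s,t,s',t')$, where $\Lambda$ is a second-order finite difference of $\bigl(\int_{\alpha}^{\beta}e^{-aw^{2}/2}\,dw\bigr)^{2}$. Checking the limits $s\uparrow t$, $s\downarrow 0$, and $t\uparrow s'$ shows that the $\phi$-integral of this piece vanishes faster than the polynomial rectangle difference and can be absorbed into $C_{a,H,T}$. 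The cross pieces $P_{3}, P_{4}$ are handled exactly as in Lemma~\ref{lem5.2}, using the uniform bound $0\leq h(x,u)-h(y,u)\leq 2$ and estimating $\phi$-integrals over thin rectangles of the form $(s,t]\times(t,t']$, $(t,s']\times(0,t]$, etc.

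The main obstacle is bookkeeping: the sixteen pieces must regroup so that the mixed second-order cancellation on the right-hand side is visible in each group separately. What makes this work is that the right-hand side $[(t'-s)^{2H}-(t'-t)^{2H}]-[(s'-s)^{2H}-(s'-t)^{2H}]$ is nonnegative (by convexity of $x\mapsto x^{2H}$ on $[0,\infty)$, since $1<2H<2$) and scales correctly with each piece, so the first-order estimates from Lemma~\ref{lem5.2} lift, piece by piece, to second-order estimates once the sixteen terms are collected by geometric support.
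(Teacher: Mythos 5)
Your overall strategy --- decompose each of the four $h^{*}$ terms exactly as in Lemma~\ref{lem5.2} and regroup the sixteen resulting pieces by type --- is precisely what the paper prescribes (the paper offers no proof of this lemma beyond that one-sentence instruction), and you correctly identify the crux: the right-hand side is a \emph{second-order} increment. In a regime such as $s=\delta$, $t=\delta+\epsilon$, $s'=1$, $t'=1+\epsilon$ it is of order $\epsilon^{2}$, while each individual first-order difference $(t'-s)^{2H}-(t'-t)^{2H}$ or $(s'-s)^{2H}-(s'-t)^{2H}$ is of order $\epsilon$. So no termwise or absolute-value bound on the alternating sum can succeed; every group must exhibit the cancellation internally. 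This is exactly where your execution has gaps.

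First, the $P_{1}$ group is $h(t',u)h(t',v)\bigl[1_{(s,t']^{2}}-1_{(t,t']^{2}}\bigr]+h(s',u)h(s',v)\bigl[1_{(t,s']^{2}}-1_{(s,s']^{2}}\bigr]$; the second bracket is a \emph{negative} measure, so replacing $h$ by its upper bound $1$ there goes the wrong way and yields only $0$ for that half, which by the order count above is not enough --- the group does not ``collapse exactly'' to the desired difference. The correct mechanism is to use that $h(\cdot,u)$ is nonincreasing in its first argument (so $h(t',u)h(t',v)\le h(s',u)h(s',v)$ for $u,v\le s'$) together with the inclusion $(s,s']^{2}\setminus(t,s']^{2}\subset(s,t']^{2}\setminus(t,t']^{2}$, which writes the group as an integral of $h(t',u)h(t',v)\phi$ over the set difference (bounded by the difference of the two rectangle increments, i.e.\ by the right-hand side with constant $1$) plus a manifestly nonpositive remainder. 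Second, the cross pieces cannot be ``handled exactly as in Lemma~\ref{lem5.2}'': there the bound is the first-order quantity $4\int_{t}^{t'}\!\int_{s}^{t}\phi\,du\,dv$, which is $O(\epsilon)$ in the regime above and hence not dominated by $C$ times the right-hand side. For the present lemma the four cross contributions must be combined and their own second-order cancellation extracted, e.g.\ by writing $h(x,v)-h(y,v)=av\,e^{av^{2}/2}\int_{x}^{y}e^{-aw^{2}/2}\,dw$ and isolating a common factor times a second difference of the remaining integrals; your sketch asserts this group is fine but does not do this. The $P_{2}$ group is described correctly in structure (and you rightly correct the paper's sign typo to $e^{a(u^{2}+v^{2})/2}$), but the endpoint-limit argument must likewise be run for the full four-term second difference rather than for each pair separately.
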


\begin{lemma}\label{lem5.4}
Let $\sigma^2_{t,s}$ and $\mu$ be as in the proof of
Theorem~\ref{th5.1}. Then we have
\begin{equation*}
\int_{\mathbb T}\frac{\mu^2dsdtds^{'}dt^{'}}{
d_H(s,t,s^{'},t^{'})(\sigma^2_{t,s}\sigma^2_{t^{'},s^{'}})}<\infty
\end{equation*}
if $\frac12<H<\frac3{4}$.
\end{lemma}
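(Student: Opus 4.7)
My plan is to split $\mathbb T$ into the three subregions distinguished by Lemma~\ref{lem5.1} and, in each, bound the integrand $\mu^2/(d_H\sigma^2_{t,s}\sigma^2_{t',s'})$ by combining the lower bound on $d_H$ from Lemma~\ref{lem5.1}, the two-sided comparison $\sigma^2_{\cdot,\cdot}\asymp|\cdot-\cdot|^{2H}$ from Lemma~\ref{lemma4.1}, and an upper bound on $|\mu|$ coming from Lemmas~\ref{lem5.2}--\ref{lem5.3}.

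The starting point is the polarization identity
\begin{equation*}
2\mu=\sigma^2_{s,t'}+\sigma^2_{t,s'}-\sigma^2_{t,t'}-\sigma^2_{s,s'},
\end{equation*}
which, together with Lemma~\ref{lem5.3}, yields in the disjoint case $0<s<t<s'<t'<T$ the bound $|\mu|\leq C\int_s^t\!\int_{s'}^{t'}(y-u)^{2H-2}dy\,du$, dominated pointwise by $C(t-s)(t'-s')(s'-t)^{2H-2}$. In the nested case $0<s'<s<t<t'<T$, a similar rearrangement together with a variant of Lemma~\ref{lem5.2} gives $|\mu|\leq C(t-s)(t'-s')^{2H-1}$. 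Using $d_H\geq\kappa(t-s)^{2H}(t'-s')^{2H}$ from Lemma~\ref{lem5.1}(2)--(3), I would change variables to the independent increments $(t-s,\,s-s',\,t'-t)$ in the nested case and $(t-s,\,s'-t,\,t'-s')$ in the disjoint case; in the latter the naive bound fails near $s'-t\to 0$, so one must interpolate $\mu^2\leq(\sigma^2_{t,s}\sigma^2_{t',s'})^{\theta}[C(t-s)(t'-s')(s'-t)^{2H-2}]^{2(1-\theta)}$ and pick $\theta$ in an admissible window whose non-emptiness is equivalent to $H<3/4$.

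The critical case, and the main obstacle, is the overlap case $0<s<s'<t<t'<T$. Here I would use the local-nondeterminism lower bound
\begin{equation*}
d_H\geq\kappa\!\left[(t-s)^{2H}(t'-t)^{2H}+(t'-s')^{2H}(s'-s)^{2H}\right]
\end{equation*}
from Lemma~\ref{lem5.1}(1), the Cauchy--Schwarz bound $\mu^2\leq\sigma^2_{t,s}\sigma^2_{t',s'}$, and the AM--GM inequality $1/(A+B)\leq 1/(2\sqrt{AB})$ to reduce the integrand to $C[(t-s)(t'-t)(t'-s')(s'-s)]^{-H}$. After the change of variables $u=s'-s$, $v=t-s'$, $w=t'-t$ (with the $s$-integration giving a harmless factor $\leq T$), the inner integrals $\int_0^T u^{-H}(u+v)^{-H}du$ and $\int_0^T w^{-H}(v+w)^{-H}dw$ each evaluate, via the scaling $u=v\tau$, to $v^{1-2H}$ times the Beta integral $\int_0^\infty\tau^{-H}(1+\tau)^{-H}d\tau$, which is finite for $1/2<H<1$. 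The remaining outer integral $\int_0^T v^{2-4H}dv$ is finite exactly when $H<3/4$, which identifies the sharp critical exponent and completes the proof.
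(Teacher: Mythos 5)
Your strategy coincides with the paper's at the structural level: bound $d_H$ from below region by region via Lemma~\ref{lem5.1}, bound $\mu$ by polarization together with Lemmas~\ref{lem5.2}--\ref{lem5.3}, and then show that the resulting deterministic integral over ${\mathbb T}$ converges precisely when $H<3/4$. The genuine difference is in the last step: the paper stops after asserting $\mu\leq C_{a,H,T}\bigl[|t-s'|^{2H}-|t'-t|^{2H}+|t'-s|^{2H}-|s-s'|^{2H}\bigr]$ and delegates the convergence of the integral to Lemma~15 of Hu--Nualart~\cite{Hu2} (see also \cite{Hu0}), whereas you carry that computation out in-house. Your three case computations are the right ones and the exponent arithmetic checks out: in the overlapping case Cauchy--Schwarz plus $1/(A+B)\leq 1/(2\sqrt{AB})$ reduces everything to $\int_0^T v^{2-4H}\,dv$; in the disjoint case the interpolation window $\bigl(\tfrac{3-4H}{4(1-H)},\tfrac{3-4H}{2(1-H)}\bigr)$ is non-empty exactly for $H<3/4$ and does meet $[0,1]$; and the nested case yields $(t-s)^{2-4H}$ times a harmless logarithmic factor. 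So your route buys self-containedness (no reliance on the external lemma) at the cost of length. Two caveats. First, a gap you share with the paper: Lemmas~\ref{lem5.2}--\ref{lem5.3} control $\mu$ only from above, while the integrand involves $\mu^2$, so a matching lower bound $\mu\geq -C[\cdots]$ (or positivity of $\mu$) is also needed; for fBm with $H>1/2$ this is automatic, but for $X^H$ the cross terms in $h^{*}$ make it a point deserving a line of justification. Second, the ``variant of Lemma~\ref{lem5.2}'' you invoke in the nested case, i.e.\ the bound on $\sigma^2_{t,s'}-\sigma^2_{s,s'}$ for $s'\leq s\leq t$, is not literally stated in the paper and must be proved by the same decomposition.
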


Lemma~\ref{lem5.4} is a consequence of the lemmas above and Lemma 15
in Hu~\cite{Hu2} (see also Hu~\cite[pp.245--247]{Hu0}). In fact,
Lemma~\ref{lem5.2} and Lemma~\ref{lem5.3} imply that the estimate
$$
\mu\leq C_{a,H,T}\left[|t-s'|^{2H}-|t'-t|^{2H}+|t'-s|^{2H}-
|s-s'|^{2H}\right]
$$
holds for all $(s,s^{'},t,t^{'})\in {\mathbb T}$. Thus,
Lemma~\ref{lem5.4} follows from Lemma~\ref{lem5.1} and Lemma 15 in
Hu~\cite{Hu2} (see also Hu~\cite[pp.245--247]{Hu0}).

Now we can prove Theorem~\ref{th5.1}.

\begin{proof}[Proof of Theorem~\ref{th5.1}]
Clearly, as $\varepsilon$ tends to zero
$\beta_T^{H,\varepsilon}-E\left[\beta_T^{H,\varepsilon}\right]$
converges in $L^2$ if and only if
\begin{equation}\label{sec5-eq5.4}
Var(\beta_T^{H,\varepsilon})=
E\bigl[(\beta_T^{H,\varepsilon})^2\bigr]-
\left(E(\beta_T^{H,\varepsilon})\right)^2
\end{equation}
tends to a constant. Now let us show that
$Var(\beta_T^{H,\varepsilon})$ converges as $\varepsilon$ tends to
zero. Using the classical equality
$$
p_\varepsilon(x)=\frac1{(2\pi)^{2}}\int_{{\mathbb
R}^2}e^{i<\xi,x>}e^{-\varepsilon\frac{|\xi|^2}{2}}d\xi,
$$
one can obtain
\begin{equation}\label{eq3.1}
\beta_T^{H,\varepsilon}=\frac1{(2\pi)^{2}}\int_0^T\int_0^t
\int_{{\mathbb
R}^2}e^{i<\xi,X^{H}_t-X^H_s>}e^{-\varepsilon\frac{|\xi|^2}{2}} d\xi
dsdt.
\end{equation}
Combining this with the facts $<\xi,X^H_t-X^H_s>\sim
N(0,|\xi|^2\sigma^2_{t,s})$ and
$$
E\left[e^{i<\xi,X^H_t-X^H_s>}\right]=
e^{-\frac12|\xi|^2\sigma^2_{t,s}},
$$
$$
\int_{{\mathbb
R}^2}e^{-\frac12|\xi|^2\left(\varepsilon+\sigma^2(t,s)\right)}d\xi=
\frac{2\pi}{\varepsilon+\sigma^2_{t,s}},
$$
we get
\begin{align}\tag*{}
E\left[\beta_T^{H,\varepsilon}\right]&=\int_0^T\int_0^t
E\left(p_\varepsilon(X^H_t-X^H_s)\right)dsdt\\
\label{eq3.2} &=\frac1{2\pi}\int_0^T\int_0^t\left(\varepsilon+
\sigma^2_{t,s}\right)^{-1}dsdt.
\end{align}
Denote ${\mathbb
T}=\{(s,t,s^{'},t^{'}):\;0<s<t<T,\,0<s^{'}<t^{'}<T\}$, then
according to the representation~\eqref{eq3.1} we get
\begin{align*}
E\left[(\beta_T^{H,\varepsilon})^2\right]&\!\!=
\frac1{(2\pi)^{4}}\!\int_{\mathbb T}\!\int_{{\mathbb
R}^{4}}\!\!Ee^{i<\xi,X^{H}_t-X^H_s>+i<\eta,X^H_{t'}- X^H_{s'}>}
e^{-\varepsilon\frac{|\xi|^2+|\eta|^2}{2}}d\xi d\eta dsdtds'dt'.
\end{align*}
Noting that
$$
<\!\xi,X^H_t-X^H_s\!>+<\!\eta,X^H_{t'}-X^H_{s'}\!>\sim
N\!\left(0,|\xi|^2\sigma^2_{t,s}\!\!+
|\eta|^2\sigma^2_{t',s'}\!+2\mu<\xi,\eta>\right)
$$
for any $\xi,\eta\in {\mathbb R}^2$, we can write
\begin{align*}
E\left[(\beta_T^{H,\varepsilon})^2\right]&=\frac1{(2\pi)^{4}}
\int_{\mathbb T}\int_{{\mathbb
R}^{4}}e^{-\frac12\left((\sigma^2_{t,s}+\varepsilon)|\xi|^2+
2\mu<\xi,\eta>+(\sigma^2_{t',s'}+\varepsilon) |\eta|^2
\right)}d\xi d\eta dsdtds'dt'\\
&=\frac1{(2\pi)^2}\int_{\mathbb
T}\left((\sigma^2_{t,s}+\varepsilon)(\sigma^2_{t',s'}+
\varepsilon)-\mu^2\right)^{-d/2}dsdtds'dt'
\end{align*}
for all $\varepsilon>0$. It follows from~\eqref{eq3.2} that
\begin{align*}
E\left[(\beta_T^{H,\varepsilon})^2\right]-
\left(E\beta_T^{H,\varepsilon}
\right)^2&=\frac1{(2\pi)^{2}}\int_{\mathbb
T}\left[\left((\sigma^2_{t,s}+\varepsilon)(\sigma^2_{t',s'}+
\varepsilon)-\mu^2\right)^{-1}\right.-
\\
&\qquad\qquad\left.\left((\varepsilon+ \sigma^2_{t,s})(\varepsilon+
\sigma^2_{t',s'})\right)^{-1}\right]dsdtds'dt'\\
&\hspace{-2cm}=\frac1{(2\pi)^{2}}\int_{\mathbb
T}\frac{\mu^2dsdtds'dt'}{\left((\sigma^2_{t,s}+\varepsilon)
(\sigma^2_{t',s'}+ \varepsilon)-\mu^2\right)(\varepsilon+
\sigma^2_{t,s})(\varepsilon+ \sigma^2_{t',s'})}.
\end{align*}
Thus, the theorem follows from Lemma~\ref{lem5.4}.
\end{proof}

\section*{Acknowledgement}
The authors would like to thank the anonymous referees for the
careful reading of the manuscript and many helpful comments.

\end{document}